\setlist{topsep=0pt, itemsep=0pt}
\algrenewcommand{\algorithmiccomment}[1]{\hfill\textit{\textcolor{gray}{\small$\triangleright$ #1}}}
\algnewcommand{\LineComment}[2]{\Statex\hspace{#1}\textit{\textcolor{gray}{\small$\triangleright$ #2}}\vspace{1pt}}
\newcommand{\medcap}{\raisebox{0.37ex}{$\mathbin{\scaleobj{0.75}{\,\displaystyle\bigcap\,}}$}}
\definecolor{beige}{RGB}{245, 245, 220}
\colorlet{DullPurple}{purple!30!blue!40!gray}
\colorlet{DullGray}{gray!75!black}
\colorlet{AquaMarine}{blue!50!green!60!gray}
\colorlet{PurpleBlue}{purple!20!blue}
\def\customred{red!50}
\def\customgreen{green!70!blue}
\def\customblue{blue!50}
\def\customorange{red!50!yellow!70}
\definecolor{trunkone}{rgb}{0.1,0.1,0.2}
\definecolor{trunktwo}{rgb}{0.3,0.3,0.4}
\definecolor{trunkthree}{rgb}{0.5,0.5,0.6}
\newcommand\blfootnote[1]{%
  \begingroup
  \renewcommand\thefootnote{}\footnote{\noindent#1}%
  \addtocounter{footnote}{-1}%
  \endgroup
}
\tikzset{every picture/.style={line width=1pt}}       
\newcommand{\inlinesquare}[3][0.5]{
  \tikz \draw[line width={#2}, draw={#3}] (0,0) rectangle (#1,#1)%
  (-#1,#1/2) -- (2*#1,#1/2);
}
\newcommand{\inlinedot}[3][0.5]{
  \tikz \draw[line width={#2}, draw={#3}, fill={#3}] (0,0) circle [radius=#1]%
  (-2.5*#1,0) -- (2.5*#1,0);
}
\newcommand{\inlinecircle}[3][0.5]{
  \tikz \draw[line width={#2}, draw={#3}] (0,0) circle [radius=#1]%
  (-2.2*#1,0) -- (2.2*#1,0);
}
\newcommand{\inlinetriangle}[3][0.5]{
  \tikz \draw[line width={#2}, draw={#3}] (0,0) -- (#1,0) -- (0.5*#1,0.866*#1) -- cycle%
  (-#1/2,0.35*#1) -- (1.5*#1,0.35*#1);
}
\declaretheoremstyle[
  spaceabove=2pt,
  spacebelow=1pt,
  headfont=\color{black}\normalfont\bfseries,
  bodyfont=\normalfont\itshape,
  notefont=\color{DullGray}\normalfont\itshape\bfseries]{clrBlack}
\declaretheoremstyle[spaceabove=2pt,spacebelow=1pt,headfont=\color{DullGray}\normalfont\bfseries\itshape,qed=\qedsymbol]{clrGray}
\declaretheorem[style=clrBlack,name=Theorem]{theorem}
\declaretheorem[style=clrBlack,name=Proposition]{proposition}
\declaretheorem[style=clrBlack,name=Definition]{definition}
\declaretheorem[style=clrBlack,name=Corollary]{corollary}
\declaretheorem[style=clrBlack,name=Lemma]{lemma}
\declaretheorem[style=clrBlack,name=Remark]{remark}
\declaretheorem[style=clrBlack,name=Assumption]{assumption}
\declaretheorem[style=clrGray,name=Proof,numbered=no]{proof}
\titleformat*{\section}{\large\bfseries}
\titleformat*{\subsection}{\normalsize\bfseries}
\titleformat*{\subsubsection}{\small\bfseries}
\titleformat*{\paragraph}{\small\bfseries}
\titleformat*{\subparagraph}{\small\bfseries}
\newcommand{\behcetfull}{Beh\c{c}et~A\c{c}\i kme\c{s}e}
\let\emptyset\varnothing
\newcommand{\mb}[1]{\mathbb{#1}}
\newcommand{\mc}[1]{\mathcal{#1}}
\newcommand{\mr}[1]{\mathrm{#1}}
\newcommand{\bR}{\mb{R}}
\newcommand{\bRnx}{\mb{R}^{n_x}}
\newcommand{\bRnu}{\mb{R}^{n_u}}
\newcommand{\bX}{\mb{X}}
\newcommand{\bU}{\mb{U}}
\newcommand{\bZr}[2]{{\textstyle[#1\!:\!#2]}}
\newcommand{\cF}{\mc{F}}
\newcommand{\cB}{\mc{B}}
\newcommand{\cR}{\mc{R}}
\newcommand{\ol}[1]{\overline{#1}}
\newcommand{\ul}[1]{\underline{#1}}
\newcommand{\derv}[1]{\overset{\scriptscriptstyle\circ}{#1}}
\newcommand   {\ddto} {\textsc{ddto}}
\newcommand{\ddtoscp} {\textsc{ddto-scp}}
\newcommand{\ddtoqcvx}{\textsc{ddto-qcvx}}
\newcommand{\ddtomicp}{\textsc{ddto-micp}}
\newcommand{\gfunc}{\mathfrak{g}}
\newcommand{\tfinal}{t_{\mr{f}}}
\newcommand{\normplus}[1]{|#1|_+}
\newcommand{\selector}[1]{{}^{#1}\!E}
\newcommand{\zeros}[1]{0_{#1}}
\newcommand{\eye}[1]{I_{#1}}
\renewcommand{\dot}[1]{\overset{{\text{\large.}}}{#1}}
\newcommand{\znrm}[1]{\|#1\|_{\scriptscriptstyle\blacklozenge}}
\newcommand{\ctscvx}{{\scalebox{1.1}{\textsc{{\scalebox{0.73}{ct-}}sc{\scalebox{0.73}{vx}}}}}}
\title{\bf Deferred-Decision Trajectory Optimization}
\author{Purnanand Elango$^\ast$, Selahattin Burak Sars{\i}lmaz$^{\dagger}$, and \behcetfull$^\S$}
\date{}
\begin{document}

\twocolumn[
  \begin{@twocolumnfalse}
    \maketitle
    \begin{abstract}
        We present {\ddto}---deferred-decision trajectory optimization---a framework for trajectory generation with resilience to unmodeled uncertainties and contingencies. The key idea is to ensure that a collection of candidate targets is reachable for as long as possible while satisfying constraints, which provides time to quantify the uncertainties.  We propose optimization-based constrained reachability formulations and construct equivalent  cardinality minimization problems, which then inform the design of computationally tractable and efficient solution methods that leverage state-of-the-art convex solvers and sequential convex programming (SCP) algorithms. The goal of establishing the equivalence between constrained reachability and cardinality minimization is to provide theoretically-sound underpinnings for the proposed solution methods. We demonstrate the solution methods on real-world optimal control applications encountered in quadrotor motion planning.
    \end{abstract}
    \vspace{2em}
  \end{@twocolumnfalse}
]

\blfootnote{%
$^{\ast}$Research Scientist, Mitsubishi Electric Research Laboratories (MERL), Cambridge, MA, USA. Email: {\tt\small elango@merl.com}\\%
$^{\dagger}$Assistant Professor, Department of Electrical and Computer Engineering, Utah State University, Logan, UT, USA. Email: {\tt\small burak.sarsilmaz@usu.edu}\\%
$^{\S}$Professor, William E. Boeing Department of Aeronautics and Astronautics, University of Washington, Seattle, WA, USA. Email: {\tt\small behcet@uw.edu}\\%
This work was supported by the Office of Naval Research Grant N00014-20-1-2288. P. Elango was at the University of Washington during the development of this work.}

\section{Introduction}
The reference signals generated by the feedforward block of a control system often solve constrained optimal control problems \cite{tsiotras2017toward}. It is desirable to have the system response tightly track the reference signals in closed-loop with a feedback controller so that the constraint satisfaction and performance requirements guaranteed by the reference signal continue to hold. However, for satisfactory performance of the closed-loop system, uncertainties need to be accounted for: both within feedforward and feedback blocks. Depending on the nature of the uncertainties, techniques from robust and stochastic control can be utilized. The former deals with uncertainties which are known to belong to a certain class or a set, while the latter handles uncertainties that are known to possess a probability distribution \cite{mayne2015robust}. However, in many real-world circumstances (e.g., search and rescue post-natural disaster and planetary landing on unknown terrain), it is not possible to quantify or model all sources of uncertainties and contingencies in the operational environment a priori. 

So, how do we reduce the likelihood of mission failure or off-nominal performance in such cases? The proposed work addresses this question in the context of constrained trajectory generation for a vehicle with a known initial state. One approach to staying immune to unmodeled uncertainties and contingencies (i.e., ``unknown'' unknowns) is considering multiple candidate targets, instead of a single target. We focus our attention on the feedforward block and ask the question: how can the reference signal help with the identification of the best target? In the real-world setting, it is reasonable to assume that the vehicle can acquire new actionable information as it moves towards the candidate targets. So, deferring the decision to commit to/select a target provides time to learn (acquire more information) about the uncertainties. In a closed-loop system, for example, the perception and measurement updates provide new information. Ensuring that the candidate targets are reachable for as long as possible enables the vehicle to acquire new information for as long as possible, which then informs the best choice of target, where ``best'' could mean safest or cost-efficient.

The example of soft landing of a spacecraft on an unknown planetary terrain highlights the usefulness of deferring decision. The goal is to land a spacecraft at one of landing sites from a collection of candidates (see Figure \ref{fig:motivate}). We only possess coarse-grained information about the terrain and need to wait until the spacecraft nears the terrain to gather high-resolution information. It is straightforward to imagine a situation where the spacecraft chooses a landing site prematurely, but determines that it is not viable after getting too close, at which point recovery or divert might be impossible. Therefore, deferring decision is useful in this scenario for keeping a collection of landing sites reachable for a period of time while high-resolution terrain information is gathered to determine which candidate landing sites are viable \cite{brugarolas2017guidance}. Note that interplanetary communication has large latency which makes it infeasible to control the spacecraft in real-time with an Earth-based operator.
\begin{figure}[!ht]
    \centering
    \input{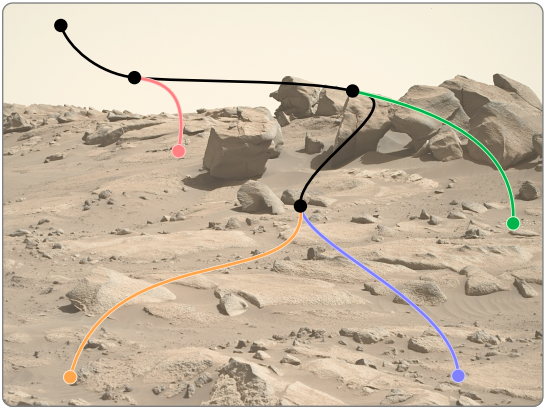}
    \caption{A Mars landing example where deferring decision is useful. The black trajectory segments keep a collection of candidate landing sites reachable (colored nodes). Each black node serves as a decision point beyond which reachability to one of the landing sites is lost. While the spacecraft follows the black segment, it can learn more about the terrain to determine the most viable landing site. The background image (taken by the Perseverance rover in December 2023 \cite{mars2020week149}) shows examples of a priori unknown irregularities on the Martian surface which can potentially make landing sites infeasible.}
    \label{fig:motivate}
\end{figure}

We present {\ddto}---deferred-decision trajectory optimization, a fully deterministic framework for formulating constrained trajectory generation in the presence of such unmodeled uncertainties and contingencies. First, we propose constrained-reachability-based formulations for {\ddto}---not with intention of directly solving the resulting optimization, but with the goal of discovering/analyzing the solution structure. Constrained reachable sets offer the most natural way to describe the notion of deferring decision. While they are intractable to compute for nonlinear systems with state dimension greater than four \cite[Fig. 2]{chen2018hamilton}, they are still useful building blocks for analyzing the problem at hand. Next, we show that the proposed constrained-reachability formulations have equivalent representations as cardinality minimization problems. While the cardinality minimization problems are also intractable to solve in general, their purpose here is to illuminate the structure of an optimal solution. With this knowledge we can design approximate, effective and efficient solution methods that are specialized to trajectory optimization problems. Note that we do not explore the well-known, general-purpose solution methods for general cardinality minimization problems, such as compressed sensing \cite{donoho2006compressed} and sparse recovery methods \cite{marques2019review}. 

We propose three solution methods that either solve the cardinality minimization problems or ensure the corresponding solution structure. Among these, two methods are designed for discrete-time affine systems subject to convex constraints, where we leverage mixed-integer conic programming (MICP) and quasiconvex optimization. The third method handles continuous-time nonlinear systems subject to nonconvex constraints, where we use the recently developed sequential convex programming (SCP) framework for nonconvex trajectory generation \cite{elango2024successive}. The method based on quasiconvex optimization was earlier proposed as a heuristic approach for {\ddto} in \cite{elango2022deferring}, which was followed by another variant, \textsc{adaptive}-{\ddto}, demonstrated within a closed-loop simulation framework \cite{haynerbuckner2023halo} designed for precision landing on an unknown planetary terrain with real-time vision-based perception.

We infer that the optimal solution for {\ddto} has a tree-like structure with trunk and branch trajectories (see Figure \ref{fig:motivate}). Hence, graph-based techniques such as sampling-based planners \cite{orthey2023sampling} might seem naturally suited for modeling and computing such structures. However, we adopt optimization-based modeling to place emphasis on constraint satisfaction and dynamic feasibility, which allows us to harness a variety of specialized techniques such as multiple-shooting and time-dilation \cite{elango2024successive}, and state-of-the-art convex optimization-based solution methods which are efficient and scalable \cite{domahidi2013ecos,yu2023extrapolated}. Furthermore, it is a natural concern that deferring decision (which is essentially procrastination) could come at the expense of some trajectory cost---the vehicle must not consume all of the onboard power resources or fuel in its pursuit of ``buying more time''. Therefore, in the proposed formulation and solution methods, we explicitly call out the importance of imposing constraints on the cumulative cost of a trajectory.

\subsection{Related Work}
Ideas related to deferred decision-making have received attention in the robotics and motion planning literature. An approach for air traffic congestion resolution which uses deferred decisions \cite{zobell2009deferability} demonstrates that conserving future flexibility can help manage the risk of uncertain predictions. Deferred decisions can counter the accumulation of errors in INS-based navigation \cite{bruder1999terrain} by  simultaneously allowing multiple potential trajectories to exist and selecting the best one to represent the robot's path. 

Enhancements to model predictive control (MPC) to anticipate future uncertainties have been achieved. A feedback min-max MPC approach was introduced in \cite{scokaert1998minmax}, where a family of control sequences is optimized at each time, each one corresponding to a different disturbance profile. Consequently, this approach avoids the likely feasibility problems that result from the use of other min-max formulations that optimize a single control profile over all possible future disturbance realizations. However, the knowledge of the type and source of the uncertainties is still necessary for this approach. The optimal cost design for MPC in \cite{jain2021optimal} demonstrates that delaying the decision until later implicitly accounts for the fact that the agent will get more information in the future and be able to make a better decision, which is possible by purposefully choosing a new MPC cost that is different from the planned trajectory cost. The branching MPC framework in \cite{chen2022interactive} considers a finite set of policies to represent the continuous spectrum of reactive behaviors of an uncontrolled agent. This approach bears resemblance to the notion of interaction-awareness in \cite{wang2023interaction}. The anomaly-detection framework proposed in \cite{sinha2024realtime} leverages the zero-shot reasoning capabilities of large language models (LLM) along with MPC to maintain a collection of safe trajectories that can handle out-of-distribution failure scenarios.

Methods developed for contingency planning and abort guidance are also related to deferred decisions since they accomplish a similar goal. An approach for computing abort trajectories along with the nominal is provided in \cite{lu2021abort}. The vehicle can divert to a predetermined safe region using a precomputed abort trajectory if any off-nominal effects are detected. Similar contingency planning measures are proposed for autonomous driving applications in \cite{hardy2013contingency,alsterda2019contingency}. 

Besides the robotics applications, deferred decision-making has been incorporated in floorplanning algorithms for very large scale integration (VLSI) \cite{yan2008defer}, data classification tasks \cite{baram1998partial}, and active fault diagnosis \cite{puncochar2018multiple}.

Finally, research from psychology shows that deferring decision is a likely outcome when there are many options to choose from \cite{tversky1992choice}, i.e., contrary to the principle of value maximization, choices can introduce conflict. Moreover, studies from organizational psychology have determined that procrastination plays a key role in improving human decision-making \cite{grant2016surprising}. The ability to defer the decision about choosing a goal can improve the performance and reliability of human-in-the-loop (HIL) robotic systems.

\subsection{Notation}
The set of real numbers is denoted by $\bR$, the set of nonnegative real numbers by $\bR_+$, the set of $n$-dimensional real vectors by $\bR^n$, and the set of $m\times n$ matrices by $\bR^{m\times n}$. The set of integers between (and including) $a$ and $b$ (with $a \le b$) is denoted by $\bZr{a}{b}$, and the cardinality of a set $C$ by $|C|$. The vector of zeros in $\bR^n$ is denoted by $\zeros{n}$, and the identity matrix in $\bR^{n\times n}$ by $\eye{n}$. The concatenation of vectors $v\in\bR^n$ and $w\in\bR^m$ is denoted by $(v,w)\in\bR^{n+m}$. The function $x \mapsto \znrm{x}$ indicates whether a vector $x$ is non-zero, i.e.,
\[
    \znrm{x} \triangleq \left\{ \begin{array}{ll}
         1 &\text{if } x \ne 0,  \\
         0 &\text{otherwise}.
    \end{array} \right.
\]
\section{Preliminaries}
Consider a time-invariant, discrete-time nonlinear dynamical system 
\begin{align}
    x_{k+1} = f(x_k,u_k),\quad k \ge 1,\label{eq:dyn-sys}
\end{align}
where index $k$ corresponds to time, $x_k\in\bRnx$ is the state, and $u_k\in\bRnu$ is the control input. The state and control input are required to lie in sets $\bX\subseteq\bRnx$ and $\bU\subseteq\bRnu$, respectively. We refer to a sequence of $N$ states as a trajectory of horizon length $N$. A trajectory is said to be feasible if the sequence of states lie in $\bX$, the corresponding sequence of control inputs lie in $\bU$, and \eqref{eq:dyn-sys} holds for the trajectory and control input sequence. We are interested in a feasible trajectory of horizon length $N$ that starts from $z^0\in\bX$ and terminates at one of the $n$ targets represented by sets $\mc{Z}^1,\ldots,\mc{Z}^n\subseteq \bX$.

The space of feasible trajectories and control input sequences for \eqref{eq:dyn-sys} can be characterized by constrained forward and backward reachable sets. The set of all terminal states of the feasible trajectories of horizon length $M+1$ with initial state $z\in\bX$ is denoted by $\cF_{M}(z)$.
\begin{definition} Given $M \ge 1$, the $M$-step constrained forward reachable set of $z\in\bX$ is given by
\[
    \cF_{M}(z) \triangleq \left\{\,x_{M+1}\,\middle|\,\begin{array}{l} u_1,\ldots,u_M\in\bU\\
    x_{k+1}=f(x_k,u_k),~k\in\bZr{1}{M}\\
    x_1=z,~x_2,\ldots,x_{M+1}\in\bX
    \end{array}\!\right\}.
\]
Furthermore, $\cF_0(z) \triangleq \{z\}$.\label{def:k-CFRS}
\end{definition}
The set of all initial states of the feasible trajectories of horizon length $M+1$ that terminate at $\mc{Z}\subseteq\bX$ is denoted by $\cB_M(\mc{Z})$.
\begin{definition} Given $M\ge 1$, the $M$-step constrained backward reachable set of $\mc{Z}\subseteq\bX$ is given by
\[
    \cB_M(\mc{Z}) \triangleq \left\{\,x_1\,\middle|\,\begin{array}{l} u_1,\ldots,u_M\in\bU\\
    x_{k+1}=f(x_k,u_k),~k\in\bZr{1}{M}\\
    x_1,\ldots,x_M\in\bX,~x_{M+1} \in \mc{Z}
    \end{array}\!\right\}.
\]
Furthermore, $\cB_0(\mc{Z}) \triangleq \mc{Z}$.\label{def:k-CBRS}
\end{definition}
The above definitions for the constrained forward and backward reachable sets are closely related to Definitions 10.6 and 10.5, respectively, in \cite{borrelli2017predictive}. We have the following result about the states on a trajectory of horizon length $N$ which are recursively selected from the constrained reachable sets.
\begin{lemma} Consider a trajectory $x_1,\ldots,x_N$. Then, the following statements hold:
\begin{enumerate}
    \item If $x_1 = z^0$ and  $x_k\in\cF_1(x_{k-1})$ for $k\in\bZr{2}{N}$, then $\cF_1(x_{k-1}) \subseteq \cF_{k-1}(z^0)$ for $k\in\bZr{2}{N}$.
    \item If $x_N\in\mc{Z}^j$ for some $j\in\bZr{1}{n}$ and 
    $x_{k-1}\in\cB_1(\{x_k\})$ for $k\in\bZr{2}{N}$, then $\cB_1(\{x_k\})\subseteq\cB_{N-k+1}(\mc{Z}^j)$ for $k\in\bZr{2}{N}$.
\end{enumerate}\label{lem:1-step-to-k-step-RS}
\end{lemma}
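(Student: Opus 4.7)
The plan is to prove both parts by straightforward induction, using the fact that the composition of feasible one-step transitions yields a feasible multi-step trajectory. Part 1 is a forward induction on $k$, and Part 2 is a backward induction on $k$ (i.e., starting at $k = N$ and decreasing).

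For Part 1, I would induct on $k \in \bZr{2}{N}$. The base case $k=2$ is trivial: $\cF_1(x_1) = \cF_1(z^0)$ by the hypothesis $x_1 = z^0$, and $\cF_1(z^0) = \cF_{k-1}(z^0)$ when $k=2$. For the inductive step, assume $\cF_1(x_{k-1}) \subseteq \cF_{k-1}(z^0)$ and pick any $y \in \cF_1(x_k)$. By Definition~\ref{def:k-CFRS} there exists $u \in \bU$ with $y = f(x_k, u)$ and $y \in \bX$. The hypothesis $x_k \in \cF_1(x_{k-1})$ combined with the inductive hypothesis gives $x_k \in \cF_{k-1}(z^0)$, so there is a feasible trajectory $z^0 = \tilde{x}_1, \tilde{x}_2, \ldots, \tilde{x}_k = x_k$ of horizon length $k$ with admissible controls $\tilde{u}_1, \ldots, \tilde{u}_{k-1}$. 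Appending the step $(\tilde{x}_k, u) \mapsto y$ produces a feasible trajectory of horizon length $k+1$ from $z^0$ terminating at $y$, which certifies $y \in \cF_k(z^0)$, completing the induction.

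For Part 2, I would induct backward on $k$, starting at $k = N$. The base case $k = N$ reduces to showing $\cB_1(\{x_N\}) \subseteq \cB_1(\mc{Z}^j)$, which follows immediately from monotonicity of $\cB_1$ since $x_N \in \mc{Z}^j$: any $y$ with $f(y, u) = x_N \in \mc{Z}^j$ and $y \in \bX$ for some $u \in \bU$ also lies in $\cB_1(\mc{Z}^j)$. For the inductive step, assume $\cB_1(\{x_{k+1}\}) \subseteq \cB_{N-k}(\mc{Z}^j)$ and pick any $y \in \cB_1(\{x_k\})$; there exists $u \in \bU$ with $f(y,u) = x_k$ and $y \in \bX$. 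The hypothesis gives $x_k \in \cB_1(\{x_{k+1}\}) \subseteq \cB_{N-k}(\mc{Z}^j)$, so there is a feasible trajectory from $x_k$ to $\mc{Z}^j$ using $N-k$ dynamics steps. Prepending $(y, u) \mapsto x_k$ yields a feasible trajectory from $y$ to $\mc{Z}^j$ using $N-k+1$ steps, hence $y \in \cB_{N-k+1}(\mc{Z}^j)$.

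There is no real obstacle here; the only thing to be careful about is the off-by-one bookkeeping between the horizon length $M+1$ and the step count $M$ in Definitions~\ref{def:k-CFRS} and \ref{def:k-CBRS}, and the fact that $\cB_1(\{\cdot\})$ requires the preimage point itself to lie in $\bX$. Both parts ultimately rest on the single observation that feasible trajectory segments can be concatenated end-to-end without violating dynamics or set-membership constraints.
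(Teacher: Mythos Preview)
Your proof is correct and rests on the same concatenation idea as the paper's. The only difference is presentational: the paper proves each inclusion directly rather than by induction, since the hypothesis $x_k\in\cF_1(x_{k-1})$ for all $k$ already supplies the entire feasible prefix $x_1,\ldots,x_{k-1}$ (with the associated controls), so there is no need to invoke an inductive hypothesis or introduce an auxiliary trajectory $\tilde{x}_1,\ldots,\tilde{x}_k$.
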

\begin{proof}
1. Clearly, $x_1, \dots, x_N \in \bX$. The inclusion holds trivially for $k = 2$. We therefore fix $k\in\bZr{3}{N}$. Let $y \in  \cF_1(x_{k-1}) $. Then, $y \in \bX$ and there exists a $u_{k-1} \in \bU$ such that $y = f(x_{k-1}, u_{k-1})$. 
By the assumption, there exist $u_1, \ldots, u_{k-2} \in \bU$ such that $x_i = f(x_{i-1}, u_{i-1})$ for $i  \in\bZr{2}{k-1} $. We can now conclude that $y \in \cF_{k-1}(z^0) $.\\
2. Clearly, $x_1, \dots, x_N \in \bX$. The inclusion holds trivially for $k = N$. We therefore fix  $k\in\bZr{2}{N-1}$. Let $y \in  \cB_1(\{x_k\})$.  Then, $y \in \bX$ and there exists a $u_{k-1} \in \bU$ such that $x_k = f(y, u_{k-1})$.  By the assumption, there exist $u_k, \ldots, u_{N-1} \in \bU$ such that  $x_i = f(x_{i-1}, u_{i-1})$ for $i  \in\bZr{k+1}{N} $. We can now conclude that $y \in \cB_{N-k+1}(\{x_N\})$.
\end{proof}
\begin{remark}\label{rem:Fxtm1-notsubset-Ftz0}Let $x_1, \ldots, x_N$ be a trajectory satisfying $x_k\in\cF_{k-1}(z^0)$ for $k\in\bZr{1}{N}$. Is it true that such a trajectory is dynamically feasible, i.e., $x_{k}\in\cF_1(x_{k-1})$ for $k\in\bZr{2}{N}$? No, it is not true in general, as illustrated by the following example. Consider the dynamical system: $x_{k+1} = x_k + (u_k,u_k^2)$, with $x_k\in\bR^{2}$ and $u_k\in\bR$. The system state is subject to the constraint $a^\top x_k \le 9$ for all $k\ge 1$, where $a = (0,1)$, and the initial state is $z^0 \triangleq \zeros{2}$. Observe that $x_2\triangleq(3,9)\in\mc{F}_1(z^0)$ and $x_3 \triangleq (4,8)\in\cF_{2}(z^0)$. However, $x_3\notin\cF_1(x_2)$.
\end{remark}
Given any $J\subseteq\bZr{1}{n}$ and $k\in\bZr{1}{N}$, the intersection of the $(N-k)$-step constrained backward reachable sets of targets $\mc{Z}^j$, for $j\in J$, forms a key building block for assessing reachability to targets from a given trajectory. 
\begin{definition} Given nonempty $J\subseteq\bZr{1}{n}$ and $k\in\bZr{1}{N}$, define
\[
    \cB^J_{N-k} \triangleq \displaystyle \underset{j\in J}{\medcap} \cB_{N-k}(\mc{Z}^j).
\]\label{def:Nmt-BRS-J}
\end{definition}\vspace{-0.5cm}
For brevity in the subsequent discussion, we say ``targets in $J$'' instead of ``targets $\mc{Z}^j$, for $j\in J$'', for any $J\subseteq\bZr{1}{n}$. Another key construction associated with each target is the $k$-reach set. It characterizes the set of states for which each is the $k$th state of a feasible trajectory of horizon length $N$ to the target.
\begin{definition}[$\bm{k}$-reach set] Given $j\in\bZr{1}{n}$ and $k\in\bZr{1}{N}$, define
\[
    \cR^j_k \triangleq \cF_{k-1}(z^0) \medcap \cB_{N-k}(\mc{Z}^j).
\]\label{def:k-reach}  
\end{definition}
It is immediate from Definition \ref{def:k-reach} that every point in the $k$-reach set is the $k$th state of a feasible trajectory of horizon length $N$ from $z^0$ to $\mc{Z}^j$. The following result is the converse of this statement.
\begin{figure}[!ht]
\centering



\begin{tikzpicture}[scale=0.9]

    \def\radius{2pt}
    \def\setradius{3pt}

    \node[rectangle, fill=beige!20, draw=black!50, minimum width=8.2cm, minimum height=4.4cm, rounded corners, line width=1pt] at (3.9,-0.15) {};

    \coordinate (p1) at (0,    0.325);
    \coordinate (p2) at (0.75, 0.4);
    \coordinate (p3) at (1.85, 0.5);
    \coordinate (p4) at (3.5,  0.5);
    \coordinate (p5) at (4,    -0.1);
    \coordinate (p6) at (4.24,0);
    \coordinate (p7) at (5.85,0.2);
    \coordinate (p8) at (7,   0.3);
    \coordinate (p9) at (7.75,0.55);

    \fill[blue!20]                       (0,   0.325) circle (\setradius);
    \node[below,yshift=-0.05cm] at       (0,   0.325) {$\mathcal{F}_0$};  
    \fill[black]                         (0,   0.325) circle (\radius) node[above, yshift=0.05cm] {$z^0$};         
    \filldraw[fill=blue!30, draw=none]   (0.75,0) ellipse (0.3 and 0.6) node {$\mathcal{F}_1$};     
    \filldraw[fill=blue!40, draw=none]   (1.85,0) ellipse (0.5 and 0.9)  node {$\mathcal{F}_2$};                 
    \filldraw[fill=blue!50, draw=none]   (3.5, 0) ellipse (0.75 and 1.2) node[above] {$\mathcal{F}_3$};
    
    \fill[red!20]                                    (7.75,-0.683) circle (\setradius);
    \node[below,yshift=-0.09cm,xshift=0.04cm] at                   (7.75,-0.683) {$\mathcal{B}_0$};     
    \fill[black]                                     (7.75,-0.683) circle (\radius) node[above, yshift=0.09cm, xshift=0.04cm] {$\mc{Z}^j$};       
    \filldraw[fill=red, draw=none, fill opacity=0.3] (7,   -1) ellipse (0.3 and 0.6); 
    \node at                                         (7,   -1) {$\mathcal{B}_1$};     
    \filldraw[fill=red, draw=none, fill opacity=0.4] (5.85,-1) ellipse (0.5 and 0.9);
    \node at                                         (5.85,-1) {$\mathcal{B}_2$};                 
    \filldraw[fill=red, draw=none, fill opacity=0.5] (4.25,-1) ellipse (0.75 and 1.2);
    \node at                                         (4.25,-1) {$\mathcal{B}_3$};

    \fill[black]                                     (7.75,0.55) circle (\radius) node[above, yshift=0.04cm, xshift=0.04cm] {$\mc{Z}^i$};

    \def\ypos{1.5}

    \draw[-{To[length=1.5mm,width=2.5mm]}, thick, line width=1pt] (-0.25, \ypos) -- (8.1, \ypos) node[above] {$k$};

    \foreach \x in {0,0.75,1.85,4,5.85,7,7.75} {
        \draw[thick, line width=1pt] (\x,\ypos-0.1) -- (\x,\ypos+0.1) node {};    
    }   
    
    \draw[thick, black] (p1) to[curve through={(p2) (p3)  (p5) (p6) (p7) (p8)}] (p9);     
    
    \foreach \i in {2,3,5,7,8}
        \fill[black] (p\i) circle (\radius);    

    \node[anchor=north, xshift=-0.05cm] at (p5) {$x_4$};

\end{tikzpicture}

\caption{A trajectory of horizon length $7$ from $z^0$ to $\mc{Z}^i$ passing through $\cR^j_4$ at the fourth time node. The arguments of $\cF_{k}$ and $\cB_{k}$, for $k\in\bZr{0}{3}$, are omitted for brevity.}
\label{fig:mutual-reach-illustrate}
\end{figure}
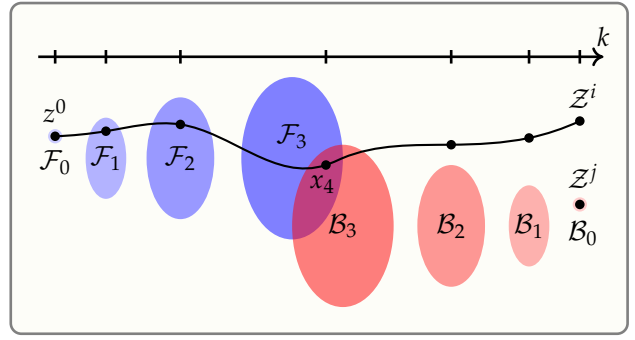
\begin{lemma}\label{lem:converse-for-k-reach}
Any feasible trajectory of horizon length $N$ from $z^0$ to $\mc{Z}^j$ intersects $\cR^j_k$ at time $k$ for $k \in \bZr{1}{N}$.
\end{lemma}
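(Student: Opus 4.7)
The plan is to verify directly from the definitions that each state $x_k$ of a given feasible trajectory lies in both of the sets whose intersection defines $\cR^j_k$. Concretely, I will fix a feasible trajectory $x_1,\ldots,x_N$ with $x_1=z^0$ and $x_N\in\mc{Z}^j$, together with the associated control sequence $u_1,\ldots,u_{N-1}\in\bU$ satisfying $x_{i+1}=f(x_i,u_i)$ for $i\in\bZr{1}{N-1}$ and $x_i\in\bX$ for $i\in\bZr{1}{N}$. The goal then reduces to showing $x_k\in\cF_{k-1}(z^0)$ and $x_k\in\cB_{N-k}(\mc{Z}^j)$ separately, for each $k\in\bZr{1}{N}$.

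For the forward containment, I would split off the case $k=1$, which is immediate since $\cF_0(z^0)=\{z^0\}$ by Definition \ref{def:k-CFRS}. For $k\in\bZr{2}{N}$, I use the truncated prefix: the controls $u_1,\ldots,u_{k-1}$ and the states $x_1=z^0,x_2,\ldots,x_k\in\bX$ form exactly a witness of the form required by Definition \ref{def:k-CFRS} with horizon $M=k-1$, so $x_k\in\cF_{k-1}(z^0)$. Symmetrically, for the backward containment I handle $k=N$ first, where $\cB_0(\mc{Z}^j)=\mc{Z}^j$ by Definition \ref{def:k-CBRS} and $x_N\in\mc{Z}^j$ by hypothesis. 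For $k\in\bZr{1}{N-1}$, the suffix controls $u_k,\ldots,u_{N-1}$ together with the states $x_k,x_{k+1},\ldots,x_{N-1}\in\bX$ and $x_N\in\mc{Z}^j$ furnish a witness of the form in Definition \ref{def:k-CBRS} with horizon $M=N-k$, yielding $x_k\in\cB_{N-k}(\mc{Z}^j)$.

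Combining the two containments gives $x_k\in\cF_{k-1}(z^0)\medcap\cB_{N-k}(\mc{Z}^j)=\cR^j_k$ by Definition \ref{def:k-reach}, which is the claim. There is no real obstacle here; the argument is purely definitional bookkeeping, and the only mild pitfall is handling the boundary indices $k=1$ and $k=N$ correctly so that the truncated prefix/suffix is nonempty and the degenerate cases $\cF_0$ and $\cB_0$ are invoked. Notably, unlike Remark \ref{rem:Fxtm1-notsubset-Ftz0}, here we start from an already dynamically feasible trajectory, so the controls needed to certify membership in the reachable sets are supplied by the trajectory itself and no additional reasoning is required.
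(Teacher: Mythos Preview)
Your proposal is correct and follows the same approach as the paper: both argue directly from Definitions~\ref{def:k-CFRS} and~\ref{def:k-CBRS} that $x_k\in\cF_{k-1}(z^0)$ and $x_k\in\cB_{N-k}(\mc{Z}^j)$. The paper's proof is a two-line appeal to those definitions, whereas you spell out the prefix/suffix witnesses and treat the boundary cases $k=1$ and $k=N$ explicitly; this is just a more detailed rendering of the same argument.
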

\begin{proof}
Let $x_1, \ldots, x_{N}$, with $x_1=z^0$ and $x_N\in\mc{Z}^j$, be a feasible trajectory of horizon length $N$. Then, $x_k\in\cF_{k-1}(z^0)$ and $x_k\in\cB_{N-k}(\mc{Z}^j)$, for $k\in\bZr{1}{N}$, from Definitions \ref{def:k-CFRS} and \ref{def:k-CBRS}, respectively. 
\end{proof}
Next, to quantify reachability to multiple targets, we consider the intersection of $k$-reach sets of multiple targets. The scenario in Figure \ref{fig:mutual-reach-illustrate} shows the existence of a  feasible trajectory of horizon length $4$ from $x_4$ to target $j$, where $x_4$ is the fourth state on a feasible trajectory to target $i$. In other words, $x_4\in\cR_4^j\cap \cR^i_4$. We use Definitions \ref{def:Nmt-BRS-J} and \ref{def:k-reach} for representing the intersection of $k$-reach sets of targets in $J$.
\begin{definition} Given  nonempty $J\subseteq\bZr{1}{n}$ and $k\in\bZr{1}{N}$, the intersection of $k$-reach sets of targets in $J$ is denoted by
\[
    \cR^J_k \triangleq \underset{j\in J}{\medcap} \cR^j_k =  \cF_{k-1}(z^0)\medcap \cB^J_{N-k}.
\]\label{def:k-reach-J}
\end{definition}\vspace{-0.4cm}
\begin{definition}Given $k\in\bZr{1}{N}$, the collection of all subsets of targets indices (i.e., subsets of $\bZr{1}{n}$), for which the intersection of $k$-reach sets is nonempty, is denoted by
\[
    \Lambda_k \triangleq \{\,J \subseteq \bZr{1}{n}\,|\,\cR^J_{k} \ne \emptyset, J\ne\emptyset\,\}.
\]\label{eq:Lamk}
\end{definition}\vspace{-2em}
{\ddto} seeks trajectories, from initial state to targets, that maximize the size of the element of $\Lambda_k$ selected for each $k\in\bZr{1}{N}$, in a cumulative sense.

The following lemma describes the consequence of the $k$-reach set of a target collection being empty and nonempty at a particular time instant.
\begin{lemma}For any nonempty $J\subseteq \bZr{1}{n}$, the following holds:
\begin{enumerate}
    \item If there exists a $\ol{k}\in\bZr{1}{N}$ such that $\cR^J_{\ol{k}} \ne \emptyset$, then $\cR^J_k  \ne \emptyset$ for each $k \in\bZr{1}{\ol{k}}$.
    \item If there exists a $\ul{k}\in\bZr{1}{N}$ such that $\cR^J_{\ul{k}} = \emptyset$, then $\cR^J_k = \emptyset$ for each $k \in\bZr{\ul{k}}{N}$.
\end{enumerate}\label{lem:interset-k-reach}
\end{lemma}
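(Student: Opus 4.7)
The plan is to prove part 1 directly by concatenating trajectories, then obtain part 2 as its contrapositive.

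For part 1, I would fix some $x^\star \in \cR^J_{\ol{k}}$, which by Definition \ref{def:k-reach-J} means $x^\star \in \cF_{\ol{k}-1}(z^0)$ and $x^\star \in \cB_{N-\ol{k}}(\mc{Z}^j)$ for every $j \in J$. Unpacking Definition \ref{def:k-CFRS} produces a feasible segment $x_1,\ldots,x_{\ol{k}}$ with $x_1 = z^0$ and $x_{\ol{k}} = x^\star$; while Definition \ref{def:k-CBRS}, applied once for each $j\in J$, produces a feasible forward segment of horizon length $N-\ol{k}+1$ starting at $x^\star$ and terminating in $\mc{Z}^j$.

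Now fix an arbitrary $k\in\bZr{1}{\ol{k}}$. I claim that $x_k\in\cR^J_k$, which will establish part 1. First, because $x_1,\ldots,x_k$ is a feasible segment of horizon length $k$ starting from $z^0$, one has $x_k\in\cF_{k-1}(z^0)$. Second, for each $j\in J$, concatenating the tail $x_k,x_{k+1},\ldots,x_{\ol{k}}=x^\star$ (of horizon length $\ol{k}-k+1$) with the previously obtained segment from $x^\star$ to $\mc{Z}^j$ (of horizon length $N-\ol{k}+1$) yields a feasible trajectory of horizon length $N-k+1$ from $x_k$ to $\mc{Z}^j$, so $x_k\in\cB_{N-k}(\mc{Z}^j)$. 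Intersecting over $j\in J$ gives $x_k\in\cB^J_{N-k}$, and hence $x_k\in\cR^J_k$ by Definition \ref{def:k-reach-J}.

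Part 2 I would obtain by contradiction: if $\cR^J_k\ne\emptyset$ for some $k\in\bZr{\ul{k}+1}{N}$, applying part 1 with $\ol{k}=k$ yields $\cR^J_{\ul{k}}\ne\emptyset$, contradicting the hypothesis. The main bookkeeping obstacle is aligning horizon lengths so that the concatenated control and state sequences satisfy Definitions \ref{def:k-CFRS} and \ref{def:k-CBRS} with the correct indices; beyond that, the lemma simply formalizes the monotonicity that reachability is preserved backward in time and its loss propagates forward.
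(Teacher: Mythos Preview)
Your proposal is correct and follows essentially the same approach as the paper: pick a point in $\cR^J_{\ol{k}}$, extract the backward segment to $z^0$ and the forward segments to each $\mc{Z}^j$, then show the intermediate state $x_k$ lies in $\cR^J_k$; part 2 is obtained as the contrapositive. The only cosmetic difference is that the paper packages the verification of $x_k\in\cR^j_k$ by citing Lemma~\ref{lem:converse-for-k-reach} on the concatenated trajectory, whereas you check membership in $\cF_{k-1}(z^0)$ and $\cB_{N-k}(\mc{Z}^j)$ directly from the definitions.
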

\begin{proof}1. If $\ol{k} = 1$ such that $\cR^J_{\ol{k}} \ne \emptyset$, then the statement is trivially true. We therefore assume that $\ol{k}  \in\bZr{2}{N}$ such that $\cR^J_{\ol{k}} \ne \emptyset$. 
Let $\ol{z} \in  \cR^J_{\ol{k}} $. Then, $\ol{z} \in \cF_{\ol{k}-1}(z^0) $ and $\ol{z} \in  \cB_{N-\ol{k}}(\mc{Z}^j) $  for all $j \in J$. 
If $\ol{k} =  N$, this tells us that there is a sequence $x_1, \ldots, x_{\ol{k}}$ with $x_1 = z^0 $ and $x_{\ol{k}} = \ol{z}$ such that $x_1, \ldots, x_{\ol{k}}$ is a feasible trajectory from $z^0$ to $\mc{Z}^j$ for all $j \in J$. In conjunction with Lemma \ref{lem:converse-for-k-reach}, this implies that $x_k  \in\cR^J_k $ for each $k\in\bZr{1}{\ol{k}}$.
On the other hand, if $\ol{k} \neq  N$, this tells us that there exist not only a sequence $x_1, \ldots, x_{\ol{k}}$ with $x_1 = z^0 $ and $x_{\ol{k}} = \ol{z}$ but also for each $j \in J$, a sequence $x_{\ol{k}+1}^j,\ldots,x^j_N$ such that $x_1, \ldots, x_{\ol{k}}, x_{\ol{k}+1}^j,\ldots,x^j_N $ is a feasible trajectory from $z^0$ to $\mc{Z}^j$. Hence, we have that $x_k  \in\cR^J_k $ for each $k\in\bZr{1}{\ol{k}}$. 

2. If $\ul{k} = N$ such that $\cR^J_{\ul{k}} = \emptyset$, then the statement is trivially true. We therefore let $\ul{k}  \in\bZr{1}{N-1}$ such that $\cR^J_{\ul{k}} = \emptyset$. Assume, on the contrary, that there is a $k' \in  \bZr{\ul{k}+1}{N}$ such that 
$\cR^J_{k^\prime} \neq \emptyset$. By the first part of the lemma, $\cR^J_{\ul{k}} \neq \emptyset$, which contradicts $\cR^J_{\ul{k}} = \emptyset$.
\end{proof}
It is natural that the goal of deferring decision (or procrastination) can be at odds with the goal of managing the onboard power and fuel resource: long-duration maneuvers that keep multiple targets reachable will consume more fuel. Hence, it is necessary to impose constraints based on the cumulative cost of a trajectory. 
\begin{remark}\label{rem:aug-sys-cost}The definitions of the constrained forward and backward reachable sets only consider pointwise (in time) constraints on the state and control input. Constraints based on the cumulative value of a function of state or control input evaluated for the entire trajectory can also be considered in Definitions \ref{def:k-CFRS} and \ref{def:k-CBRS}, by appending an additional state to the system dynamics. This approach turns a cumulative constraint on the trajectory and control input sequence into an equivalent terminal constraint on the additional state. For instance, the following constraint 
\begin{align}
    \sum_{k=1}^{N-1} l(x_k,u_k) \le l_{\max},\label{eq:cum-cnstr}
\end{align}
where $l$ is a stage cost function and $l_{\max}\in\bR$ is an upper bound, is transformed by augmenting the original system \eqref{eq:dyn-sys} as follows 
\begin{align}
\tilde{x}_{k+1} = \begin{bmatrix}
    x_{k+1} \\[0.1cm] \theta_{k+1}
    \end{bmatrix} = \tilde{f}(\tilde{x}_k,u_k) \triangleq  \begin{bmatrix}
                    f(x_k,u_k) \\[0.1cm] \theta_k + l(x_k,u_k)
                    \end{bmatrix}.\label{eq:aug-sys}    
\end{align}
Suppose $\mc{Z}$ is the original target. By requiring an augmented state trajectory $\tilde{x}_1,\ldots,\tilde{x}_N$ to terminate at the augmented target $\tilde{\mc{Z}}\triangleq \mc{Z}\times\{\theta\in\bR\,|\,\theta\le l_{\max}\}$ with initial condition $\theta_1=0$, we can satisfy \eqref{eq:cum-cnstr}.
\end{remark}
The goal of {\ddto}, roughly speaking, is to generate trajectories that ensure reachability to as many of the targets for as long as possible. To this end, we explore two different modeling approaches. First, we optimize a trajectory that remains within the intersection of the $k$-reach sets of a (time-varying) collection of targets indices. Second, we jointly optimize trajectories to each of the targets such that, for a collection of targets at each time, the states on the corresponding trajectories are identical. We demonstrate the equivalence between the former approach (which relies on set-valued decision variables) and the latter approach (where only the trajectories and control input sequences are decision variables).

We require the following assumption for the remainder of the discussion.
\begin{assumption}
The dynamical system \eqref{eq:dyn-sys}, initial state $z^0$, targets: $\mc{Z}^j$, for $j\in\bZr{1}{n}$, and constraint sets $\bX$ and $\bU$, are chosen such that: 
\begin{itemize}
    \item $\mc{R}_N^j \ne \emptyset$, for each $j\in\bZr{1}{n}$. 
    \item $\cR_1^{\scriptstyle[1:n]}\ne \emptyset$.
\end{itemize}\label{assumption}
\end{assumption}
\begin{remark}
Including the time-dilation approach, described in Appendix \ref{app:dilation}, within the proposed framework allows different final times for trajectories to each target, despite them having the same horizon length of $N$.\label{rem:time-dilation}
\end{remark}
\section{Constrained Reachability-Based DDTO}\label{sec:reach}
We seek a trajectory of horizon length $N$ to one of the targets, and for each $k\in\bZr{1}{N}$, we consider $(N-k)$-step constrained reachability to other targets from the state at time $k$ on the trajectory. 
\subsection{Maximize duration of reachability to a collection of targets}
\begin{definition}[Branch time] Given nonempty $J\subseteq\bZr{1}{n}$, branch time $k^J$ is the latest time for which targets in $J$ are reachable in $N-k^J$ steps from a point in $\cR^J_k$, i.e.,
\[
    k^J \triangleq \max \{\,k\in\bZr{1}{N}\,|\,\cR^J_k\ne\emptyset\,\}.
\]\label{def:kJ}
\end{definition}\vspace{-0.5cm}
Since $\cR^J_k \ne \emptyset$ and $J \neq \emptyset$ iff $J\in\Lambda_k$, the branch time can be equivalently written as $k^J \triangleq \max\{\,k\in\bZr{1}{N}\,|\,J\in\Lambda_k\}$.
The following proposition is now immediate.
\begin{proposition}\label{prop:traj-cant-intersect-after-kJ}
  For each $j \in  J$, let 
   $x_1^j, \ldots, x_N^j$ be a feasible trajectory from $z^0$ to $\mc{Z}^j$.
 Then, these trajectories do not simultaneously intersect at any time later than the branch time. 
\end{proposition}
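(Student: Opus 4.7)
The plan is to argue by contradiction using the converse property of the $k$-reach set (Lemma \ref{lem:converse-for-k-reach}) together with the definition of the branch time. The statement is vacuous if $k^J = N$, so I would dispense with that case immediately and then assume $k^J < N$ and suppose, for contradiction, that there exists some time $k^\star \in \bZr{k^J+1}{N}$ at which all the trajectories $x_1^j,\ldots,x_N^j$, for $j\in J$, simultaneously pass through a common state, i.e., $x^\star \triangleq x_{k^\star}^{j_1} = x_{k^\star}^{j_2}$ for every $j_1,j_2\in J$.

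Next, I would invoke Lemma \ref{lem:converse-for-k-reach} individually for each $j \in J$: because $x_1^j,\ldots,x_N^j$ is, by hypothesis, a feasible trajectory of horizon length $N$ from $z^0$ to $\mc{Z}^j$, its state at time $k^\star$ lies in the $k^\star$-reach set $\cR^j_{k^\star}$. Since all these states coincide with $x^\star$, we obtain $x^\star \in \cR^j_{k^\star}$ for every $j\in J$, and therefore, by Definition \ref{def:k-reach-J},
\[
    x^\star \in \underset{j\in J}{\medcap}\,\cR^j_{k^\star} = \cR^J_{k^\star}.
\]
Hence $\cR^J_{k^\star}\ne\emptyset$, which directly contradicts Definition \ref{def:kJ} since $k^\star > k^J$ was taken strictly larger than the maximum $k$ for which $\cR^J_k$ is nonempty.

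The argument is essentially a one-line application of Lemma \ref{lem:converse-for-k-reach} followed by the maximality in Definition \ref{def:kJ}, so I do not expect any technical obstacle. The only subtlety worth stating explicitly is that simultaneous intersection of the $|J|$ trajectories at time $k^\star$ is precisely what forces the common state into $\cR^J_{k^\star}$, rather than merely into each individual $\cR^j_{k^\star}$; without the ``simultaneously'' hypothesis, the conclusion would fail because distinct trajectories could visit the individual $\cR^j_{k^\star}$ at different times. Alternatively, one could appeal to part 2 of Lemma \ref{lem:interset-k-reach}, using $\cR^J_{k^J+1}=\emptyset$ to propagate emptiness to all later $k$, but the direct route through Definition \ref{def:kJ} is shorter and self-contained.
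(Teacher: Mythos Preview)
Your argument is correct and is precisely the ``immediate'' reasoning the paper has in mind: the proposition is stated without a formal proof, with only the remark that it follows directly from the definitions (and is also validated by Theorem~\ref{lem:opt-gJmax}). Your route via Lemma~\ref{lem:converse-for-k-reach} and the maximality in Definition~\ref{def:kJ} is exactly the intended one-line justification.
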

\subsection{Maximize reachable targets from trajectory to a particular target} 
Given $i\in\bZr{1}{n}$, a solution to \eqref{prb:opt-single-traj-i} consists of a trajectory to target $i$ which maximizes, in a cumulative sense, the number of reachable targets at each time instant, i.e., the trajectory selects the largest possible member of $\Lambda_k$, for each $k\in\bZr{1}{N-1}$.
\begin{subequations}
\begin{align}
\underset{x_k,\,J_k}{\mr{maximize}}~&~\sum_{k=1}^{N} |J_{k}|, & &\label{eq:opt-single-traj-obj}\\
\mr{subject~to}~&~x_k \in \cF_1(x_{k-1})\medcap \cB^{J_k}_{N-k}, & & k \in \bZr{2}{N},\label{eq:opt-dyn-feas-i}\\
 &~J_k\in\Lambda_k, & & k\in \bZr{1}{N},\\
 &~x_1 = z^0,~J_N \supseteq \{i\}.\label{eq:opt-pick-targ-i}
\end{align}\label{prb:opt-single-traj-i}%
\end{subequations}
Note that the construction of \eqref{eq:opt-dyn-feas-i} utilizes Lemma \ref{lem:1-step-to-k-step-RS} and Definition \ref{def:k-reach-J}. 
\subsection{Maximize reachable targets from trajectory to an arbitrary target}
A solution to \eqref{prb:opt-single-traj} consists of a trajectory to a target in $\bZr{1}{n}$ which maximizes, in a cumulative sense, the number of reachable targets at each time instant, i.e., the trajectory will terminate at a target that allows it to select the largest possible member of $\Lambda_k$, for each $k\in\bZr{1}{N-1}$.
\begin{subequations}
\begin{align}
\underset{x_k,\,J_k}{\mr{maximize}}~&~\sum_{k=1}^{N} |J_{k}|, & & \label{eq:opt-cost}\\
\mr{subject~to}~&~x_k \in \cF_1(x_{k-1})\medcap \cB^{J_k}_{N-k}, & & k \in \bZr{2}{N},\label{eq:opt-dyn-feas}\\
 &~J_k\in\Lambda_k, & & k\in \bZr{1}{N},\label{eq:opt-single-traj-Jk}\\
 &~x_1 = z^0.\label{eq:opt-init-cond}
\end{align}\label{prb:opt-single-traj}%
\end{subequations}
\begin{remark}
We obtain \eqref{prb:opt-single-traj} from \eqref{prb:opt-single-traj-i} by removing the explicit boundary condition constraint on $J_N$ in \eqref{eq:opt-pick-targ-i}, i.e., the choice of target is unspecified in \eqref{prb:opt-single-traj}. Constraints \eqref{eq:opt-dyn-feas} and \eqref{eq:opt-single-traj-Jk} are sufficient to ensure that the trajectory terminates at one of the targets.    
\end{remark}
\begin{lemma}[Monotonicity]\label{lem:monotonicity} The sequence of sets $J_1,\ldots,J_N$, which forms a solution to \eqref{prb:opt-single-traj}, must satisfy $J_k \subseteq J_{k-1}$ for $k\in\bZr{2}{N}$.
\end{lemma}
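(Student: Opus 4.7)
My plan is a contradiction argument. Suppose $\{(x_k^\star, J_k^\star)\}_{k=1}^N$ solves \eqref{prb:opt-single-traj} yet violates monotonicity at some $k \in \bZr{2}{N}$, so there exists an index $j \in J_k^\star \setminus J_{k-1}^\star$. I will exhibit a feasible solution with strictly larger objective by replacing $J_{k-1}^\star$ with $J_{k-1}^\star \cup \{j\}$ while leaving all states, controls, and remaining index sets unchanged, thereby contradicting optimality.

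Only the constraints at time $k-1$ are affected by this replacement: the backward-reachable-set membership in \eqref{eq:opt-dyn-feas} (vacuous when $k=2$, where only $x_1 = z^0$ applies) and the requirement $J_{k-1}^\star \cup \{j\} \in \Lambda_{k-1}$. Both reduce to showing that $x_{k-1}^\star \in \cB_{N-(k-1)}(\mc{Z}^j)$ and $x_{k-1}^\star \in \cF_{k-2}(z^0)$. For the first, the original feasibility supplies a control $u_{k-1}^\star \in \bU$ with $x_k^\star = f(x_{k-1}^\star, u_{k-1}^\star)$ and $x_{k-1}^\star \in \bX$; since $j \in J_k^\star$ forces $x_k^\star \in \cB_{N-k}(\mc{Z}^j)$, we further obtain controls $u_k, \ldots, u_{N-1} \in \bU$ and states $x_{k+1}, \ldots, x_N \in \bX$ with $x_N \in \mc{Z}^j$. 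Concatenating yields a feasible trajectory of length $N-k+1$ from $x_{k-1}^\star$ to $\mc{Z}^j$, proving $x_{k-1}^\star \in \cB_{N-(k-1)}(\mc{Z}^j)$. The second follows from Lemma \ref{lem:1-step-to-k-step-RS} when $k \ge 3$, and trivially from $\cF_0(z^0) = \{z^0\}$ when $k=2$.

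Combining the two inclusions yields $x_{k-1}^\star \in \cR^{J_{k-1}^\star \cup \{j\}}_{k-1}$, hence $J_{k-1}^\star \cup \{j\} \in \Lambda_{k-1}$, and the modified tuple is feasible with objective exactly one larger than the supposed optimum---a contradiction, establishing $J_k^\star \subseteq J_{k-1}^\star$. I expect the main delicacy to be the concatenation step, where one must carefully verify that every prepended and appended state-control pair continues to respect $\bX$ and $\bU$; the remainder is straightforward bookkeeping. An alternative (arguably cleaner) global variant of the same idea is to define $\widetilde{J}_k \triangleq \bigcup_{m=k}^N J_m^\star$, verify feasibility of $\{(x_k^\star, \widetilde{J}_k)\}$ by the same concatenation, and use optimality together with $J_k^\star \subseteq \widetilde{J}_k$ to force $J_k^\star = \widetilde{J}_k$, from which monotonicity is immediate.
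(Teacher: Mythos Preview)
Your proposal is correct and follows essentially the same contradiction argument as the paper: assume some $j\in J_k^\star\setminus J_{k-1}^\star$, use dynamic feasibility at step $k$ together with $x_k^\star\in\cB_{N-k}(\mc{Z}^j)$ to concatenate a feasible trajectory through $x_{k-1}^\star$ to $\mc{Z}^j$, and conclude that $J_{k-1}^\star$ can be enlarged by $j$, contradicting optimality. The paper phrases the concatenation via Lemma~\ref{lem:converse-for-k-reach} (obtaining $x_{k-1}\in\cR^j_{k-1}$), whereas you work directly with the backward reachable set $\cB_{N-(k-1)}(\mc{Z}^j)$ and invoke Lemma~\ref{lem:1-step-to-k-step-RS} for the forward part; these are cosmetic differences in packaging the same idea.
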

\begin{proof}
Let $x_k,J_k$, for $k\in\bZr{1}{N}$, be a solution to \eqref{prb:opt-single-traj}. Suppose that $J_k\setminus J_{k-1}\ne\emptyset$ for some $k\in\bZr{2}{N}$. Pick $j\in J_k\setminus J_{k-1}$. Then, due to Lemma \ref{lem:1-step-to-k-step-RS} and Definition \ref{def:k-reach-J}, \eqref{eq:opt-dyn-feas} implies that $x_k\in\cR^j_k$ . If $k=N$ then is clear that $x_{k-1}$ lies on a feasible trajectory to $\mc{Z}^j$, i.e., $x_{k-1}\in\cR^j_{k-1}$ from Lemma \ref{lem:converse-for-k-reach}. If $k\in\bZr{2}{N-1}$ there exists a sequence $x^j_{k+1},\ldots,x^j_N$ such that $x_1,\ldots,x_{k-1},x_k,x^j_{k+1},\ldots,x^j_N$ is feasible trajectory from $z^0$ to $\mc{Z}^j$. Then $x_{k-1}\in\cR^j_{k-1}$ from Lemma \ref{lem:converse-for-k-reach}. Therefore, $J_{k-1}$ can be enlarged to include $j$. This contradicts the assumption that $J_{k-1}$ forms a solution to \eqref{prb:opt-single-traj} (i.e., objective function \eqref{eq:opt-cost} is maximized).
\end{proof}
The monotonicity property described in Lemma \ref{lem:monotonicity} is also satisfied by solutions of \eqref{prb:opt-single-traj-i}. Further, a sequence of sets $J_1,\ldots,J_N$, which forms a solution to \eqref{prb:opt-single-traj-i}, satisfies $i\in J_k$ for $k\in\bZr{1}{N}$.
\begin{definition}[Branch time/point]\label{def:branch-kj}
Let $x_k,J_k$, for $k\in\bZr{1}{N}$, be a solution to either \eqref{prb:opt-single-traj-i} or \eqref{prb:opt-single-traj}. The latest time target $j\in\bZr{1}{n}$ is reachable is a branch time, given by
\begin{align}
    k^j \triangleq{} & \max \{\,k\,|\,j \in J_k\,\},\label{eq:tj-opt-single-traj}
\end{align} 
and the corresponding state, $x_{k^j}$, is a called a branch point.
\end{definition}
\begin{corollary} \label{cor:construct-all-traj-from-single} Let $x_k,J_k$, for $k\in\bZr{1}{N}$, be a solution to \eqref{prb:opt-single-traj-i} or \eqref{prb:opt-single-traj}, with branch times given by \eqref{eq:tj-opt-single-traj}. Since $x_{k^j}\in\cR^j_{k^j}$, for each $j\in\bZr{1}{n}$, there exists a feasible trajectory of horizon length $N-k^j+1$ from $x_{k^j}$ to $z^j$, denoted by $x^j_{k^j},\ldots,x^j_N$. Also, if $k^j>1$, let $x^j_k \triangleq x_k$, for $k\in\bZr{1}{k^j-1}$. Then $x^j_1,\ldots,x^j_N$ is a feasible trajectory to target $j$.
\end{corollary}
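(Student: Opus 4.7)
The plan is to verify three claims in sequence: (i) the premise $x_{k^j}\in\cR^j_{k^j}$ follows from the problem data, (ii) the branch trajectory $x^j_{k^j},\ldots,x^j_N$ exists and is feasible in the sense of terminating in $\mc{Z}^j$ while satisfying dynamics and constraints, and (iii) the concatenation with the trunk is a feasible trajectory of horizon length $N$ from $z^0$ to $\mc{Z}^j$.

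For (i), I would observe that, by \eqref{eq:tj-opt-single-traj}, $j\in J_{k^j}$, so $\cB^{J_{k^j}}_{N-k^j}\subseteq \cB_{N-k^j}(\mc{Z}^j)$ by Definition \ref{def:Nmt-BRS-J}. The constraint \eqref{eq:opt-dyn-feas} (or \eqref{eq:opt-dyn-feas-i}) together with $x_1=z^0$ gives, via Lemma \ref{lem:1-step-to-k-step-RS} part 1, the inclusion $x_{k^j}\in\cF_{k^j-1}(z^0)$. Intersecting yields $x_{k^j}\in \cF_{k^j-1}(z^0)\medcap\cB_{N-k^j}(\mc{Z}^j)=\cR^j_{k^j}$ via Definition \ref{def:k-reach}. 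For (ii), the membership $x_{k^j}\in \cB_{N-k^j}(\mc{Z}^j)$ supplies, by Definition \ref{def:k-CBRS}, inputs $u^j_{k^j},\ldots,u^j_{N-1}\in\bU$ and states $x^j_{k^j},\ldots,x^j_N$ with $x^j_{k^j}=x_{k^j}$, $x^j_{k+1}=f(x^j_k,u^j_k)$ for $k\in\bZr{k^j}{N-1}$, $x^j_k\in\bX$ for $k\in\bZr{k^j}{N}$, and $x^j_N\in\mc{Z}^j$. This is the branch.

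For (iii), the trunk portion $x_1,\ldots,x_{k^j}$ of the DDTO solution satisfies $x_1=z^0$, lies in $\bX$, and admits inputs $u_1,\ldots,u_{k^j-1}\in\bU$ with $x_{k+1}=f(x_k,u_k)$, since $x_k\in\cF_1(x_{k-1})$ unpacks exactly this way via Definition \ref{def:k-CFRS}. Setting $x^j_k\triangleq x_k$ for $k\in\bZr{1}{k^j-1}$ and using the branch for $k\in\bZr{k^j}{N}$, the two pieces agree at $k=k^j$ by construction, so the glued state sequence $x^j_1,\ldots,x^j_N$ together with the corresponding glued input sequence satisfies the dynamics at every step, the pointwise state and control constraints, $x^j_1=z^0$, and $x^j_N\in\mc{Z}^j$. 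The result is a feasible trajectory to target $j$. The argument is essentially bookkeeping; the only subtle point is the junction at $k=k^j$, and this is handled trivially because both the trunk and the branch are anchored at the same state, so no genuine obstacle arises.
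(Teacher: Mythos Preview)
Your proposal is correct. In fact, the paper does not provide a separate proof for this corollary: the statement itself is written as its own justification (``Since $x_{k^j}\in\cR^j_{k^j}$, \ldots there exists \ldots''), treating the construction as immediate from Definitions~\ref{def:k-CBRS} and~\ref{def:k-reach} together with the feasibility constraints in \eqref{prb:opt-single-traj-i}/\eqref{prb:opt-single-traj}. Your three-step verification (membership in $\cR^j_{k^j}$ via Lemma~\ref{lem:1-step-to-k-step-RS} and constraint \eqref{eq:opt-dyn-feas}; extraction of the branch from Definition~\ref{def:k-CBRS}; gluing at the shared anchor $x_{k^j}$) simply unpacks what the paper leaves implicit, and takes the same route. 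The only small omission is that for $k^j=1$ the backward-reachability inclusion $x_1\in\cB_{N-1}(\mc{Z}^j)$ does not come from \eqref{eq:opt-dyn-feas} (which starts at $k=2$) but rather from $J_1\in\Lambda_1$, which forces $\cR^{J_1}_1=\{z^0\}\cap\cB^{J_1}_{N-1}\ne\emptyset$; this is a one-line patch.
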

The trajectories constructed in Corollary \ref{cor:construct-all-traj-from-single} are optimal in the sense of cardinality minimization, which will be discussed in the subsequent section.
\begin{remark} Constraints \eqref{eq:opt-dyn-feas-i} and \eqref{eq:opt-dyn-feas}, which ensure dynamic feasibility, cannot alternately be represented using $1$-step constrained backward reachable sets. More precisely, a constraint such as
\[
    x_{k} \in \cB_1(\{x_{k+1}\}) \medcap \cF_{k-1}(z^0), \quad  k\in\bZr{1}{N-1},
\]
along with the boundary condition $x_N \in \mc{Z}^j$, for some $j\in\bZr{1}{n}$, will result in a na\"{i}ve single-target trajectory optimization problem which disregards reachability to other targets.
\end{remark}
\begin{remark} Consider a trajectory $x_1,\ldots,x_N$. If the following implication 
\begin{align}
    x_k\in\cF_{k-1}(z^0) \implies x_k\in\cF_{1}(x_{k-1}), \label{eq:Fxtm1-notsubset-Ftz0} 
\end{align}
holds for each $k\in\bZr{2}{N}$, then \eqref{eq:opt-dyn-feas} is equivalent to the constraint $x_k\in \cR^{J_k}_k$, for $k\in\bZr{2}{N}$. In other words, $J_k$ should be chosen such that $\cR^{J_k}_k\ne\emptyset$, which is equivalent to constraint \eqref{eq:opt-single-traj-Jk}. Hence, \eqref{prb:opt-single-traj} simplifies to 
\begin{align}
\underset{J_k\in\Lambda_k}{\mr{maximize}}~&~\sum_{k=1}^{N} |J_{k}|,\label{prb:opt-Jk-only}
\end{align}
where the trajectory states are no longer explicit decision variables. A solution to \eqref{prb:opt-Jk-only} comprises of the largest cardinality member of $\Lambda_k$, for each $k\in\bZr{1}{N}$. However, Remark \ref{rem:Fxtm1-notsubset-Ftz0} shows that \eqref{eq:Fxtm1-notsubset-Ftz0} is not true in general. As a result, a feasible trajectory that selects the largest cardinality member of $\Lambda_k$, for each $k\in\bZr{1}{N}$, may not exist.
\end{remark}
\section{Cardinality Minimization-Based DDTO}\label{sec:card-min}
In this section we show that the constrained-reachability-based descriptions for {\ddto} can be equivalently stated as cardinality minimization problems where the set-based variables are eliminated.
\subsection{Maximize duration of reachability to a collection of targets}
The problem of maximizing the time duration for which targets in $J\subseteq\bZr{1}{n}$ are reachable is the same as the problem of generating trajectories to targets in $J$ which stay identical for as long as possible, which can be described as follows
\begin{subequations}
\begin{align}
\underset{x^j_k,\,u^j_k}{\mr{maximize}}~&~\gfunc^{|J|}(X^J), & & \label{eq:gJmax-qcvx-obj}\\
\mr{subject~to}~&~x^j_{k+1} = f(x^j_k,u^j_k), & & k\in\bZr{1}{N-1},\\
 &~x^j_k\in\bX,~u^j_k\in\bU, & & k\in\bZr{1}{N-1},\\
 &~x^j_1 = z^0,~x^j_N \in \mc{Z}^j, & & \\
 &~j\in J, & & \nonumber 
\end{align}\label{prb:gJmax}%
\end{subequations}
where $X^J = (\ldots,x^j_k,\ldots)$, with $k\in\bZr{1}{N}$ and $j\in J$, concatenates the states of all trajectories, and $\gfunc^m:\bR^{mn_xN}\to\bR$ evaluates the maximum time duration (starting from $k=1$) until which given $m$ trajectories stay identical. Then the objective function \eqref{eq:gJmax-qcvx-obj} is given by
\begin{align}
    & \gfunc^{|J|}(X^J) =\label{eq:gJ}\\
    & \max \big\{k^\prime\in\bZr{1}{N}\,\big|\,x^j_k=x^i_k~\forall\,k\in\bZr{1}{k^\prime},~\forall\,i,j\in J\big\}.\nonumber
\end{align}\vspace{-0.5cm}
\begin{theorem}The optimal value of \eqref{eq:gJmax-qcvx-obj} is the branch time $k^J$ from Definition \ref{def:kJ}.\label{lem:opt-gJmax}
\end{theorem}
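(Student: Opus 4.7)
The plan is to establish the claimed equality by proving two inequalities: that the optimal value of \eqref{eq:gJmax-qcvx-obj} is at least $k^J$, and that it is at most $k^J$. Both directions pivot on unpacking the definitions: $\mathfrak{g}^{|J|}(X^J)$ counts the length of the common prefix shared by the trajectories $\{x^j_k\}_{k=1}^N$ for $j \in J$, while $k^J$ is the last time index at which $\mathcal{R}^J_k = \mathcal{F}_{k-1}(z^0) \cap \mathcal{B}^J_{N-k}$ is nonempty.

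For the lower bound ($\geq k^J$), I would construct a feasible $X^J$ that achieves $\mathfrak{g}^{|J|}(X^J) = k^J$. By Definition \ref{def:kJ} and Definition \ref{def:k-reach-J}, I can pick a point $\bar{z} \in \mathcal{R}^J_{k^J} = \mathcal{F}_{k^J - 1}(z^0) \cap \bigcap_{j \in J} \mathcal{B}_{N-k^J}(\mathcal{Z}^j)$. The membership $\bar{z} \in \mathcal{F}_{k^J-1}(z^0)$ delivers, via Definition \ref{def:k-CFRS}, a single feasible sequence $\bar{x}_1, \ldots, \bar{x}_{k^J}$ with $\bar{x}_1 = z^0$ and $\bar{x}_{k^J} = \bar{z}$. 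For each $j \in J$, the membership $\bar{z} \in \mathcal{B}_{N-k^J}(\mathcal{Z}^j)$ delivers, via Definition \ref{def:k-CBRS}, a feasible continuation $\bar{z}, x^j_{k^J + 1}, \ldots, x^j_N$ terminating in $\mathcal{Z}^j$. Setting $x^j_k \triangleq \bar{x}_k$ for $k \in \bZr{1}{k^J}$ and appending the continuations yields trajectories that are feasible for \eqref{prb:gJmax} and agree on $\bZr{1}{k^J}$, so $\mathfrak{g}^{|J|}(X^J) \geq k^J$.

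For the upper bound ($\leq k^J$), I would suppose for contradiction that some feasible choice of $\{x^j_k, u^j_k\}$ attains $\mathfrak{g}^{|J|}(X^J) = k' > k^J$. By the definition \eqref{eq:gJ}, all the trajectories share a common state $x^\star \triangleq x^j_{k'}$ for every $j \in J$. Since the prefix $x^j_1, \ldots, x^j_{k'}$ is dynamically feasible with $x^j_1 = z^0$, we have $x^\star \in \mathcal{F}_{k'-1}(z^0)$. Since the suffix $x^j_{k'}, \ldots, x^j_N$ is dynamically feasible and terminates in $\mathcal{Z}^j$, we have $x^\star \in \mathcal{B}_{N-k'}(\mathcal{Z}^j)$ for every $j \in J$. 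Intersecting gives $x^\star \in \mathcal{R}^J_{k'}$, so $\mathcal{R}^J_{k'} \neq \emptyset$, contradicting the maximality in Definition \ref{def:kJ}. Combining the two bounds gives the result.

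The argument is essentially a dictionary translation between the set-valued reachability description and the trajectory-level description, so there is no single hard step; the only care needed is to handle the boundary case $k^J = N$ cleanly (where there is no continuation to construct, since $\mathcal{B}_0(\mathcal{Z}^j) = \mathcal{Z}^j$ so $\bar{z}$ already lies in every target), and to invoke Assumption \ref{assumption} to ensure $k^J$ is well defined (since $\mathcal{R}^J_1 \neq \emptyset$ follows from $\mathcal{R}^{[1:n]}_1 \neq \emptyset$, guaranteeing the max is over a nonempty set).
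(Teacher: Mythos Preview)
Your proposal is correct and follows essentially the same approach as the paper's proof: both establish the equality by proving the two inequalities, constructing feasible coincident trajectories from a point $\bar{z}\in\mathcal{R}^J_{k^J}$ for the lower bound, and deriving a contradiction to the maximality of $k^J$ for the upper bound. The paper phrases both directions as contradictions (assuming $k^J<k^\star$ and $k^\star<k^J$), whereas you give a direct construction for the lower bound, but this is purely cosmetic; your treatment of the boundary case $k^J=N$ and the well-definedness via Assumption~\ref{assumption} matches the paper's handling.
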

\begin{proof} Let trajectories $\bar{x}^j_1,\ldots,\bar{x}^j_N$, for $j\in J$, form a solution to \eqref{prb:gJmax}, and let $k^\star$ be the optimal value of \eqref{eq:gJmax-qcvx-obj}. Suppose that $k^J < k^\star$. Then, from Definition \ref{def:kJ}, $\cR^J_{k^\star} = \emptyset$. Since $k^\star$ is the latest time instant until which the trajectories stay coincident, we have that $\bar{x}^i_{k^\star}\in\cR^j_{k^\star}$, for each $i,j\in J$. Let $z^\star$ denote the final state on the coincident portion of the trajectories. Then, $z^\star \in \cR_{k^\star}^j$, for $j\in J$. Hence, $z^\star \in \cR^J_{k^\star}$, which is a contradiction. 
    
Next, suppose that $k^\star < k^J$. Since $\cR^{J}_{k^J} \ne \emptyset$, pick $z^\star \in \cR^J_{k^J}$. If $k^J = N$ there exists a sequence of states $x_1,\ldots,x_{k^J-1}$ such that $x_1, \ldots, x_{k^J-1} , z^\star$ is a feasible trajectory to $\mc{Z}^j$, for all $j\in J$. If $k^J < N$ there exist sequences $x_1,\ldots,x_{k^J-1}$ and $x^j_{k^J+1},\ldots,x^j_N$ such that $x_1, \ldots, x_{k^J-1} , z^\star, x^j_{k^J+1},\ldots,x^j_N$ is a feasible trajectory to $\mc{Z}^j$, for each $j\in J$. Note that the states of these trajectories are coincident until time $k^J$, i.e., the value of \eqref{eq:gJmax-qcvx-obj} is $k^J$. This contradicts the assumption that the optimal value of \eqref{eq:gJmax-qcvx-obj} ($k^\star$) is strictly smaller than $k^J$.
\end{proof}
The above result also validates Proposition \ref{prop:traj-cant-intersect-after-kJ}.
\subsection{Maximize reachable targets from trajectory to a particular target}
\begin{figure}[!ht]
    \centering



\begin{tikzpicture}[x=0.75pt,y=0.75pt,yscale=-1,xscale=1,scale=0.4]

\node[rectangle, fill=beige!20, draw=black!50, minimum width=6.5cm, minimum height=5.3cm, rounded corners, line width=1pt] at (315,610) {};    

\def\radius{5}
\def\verticalshift{350} 
\def\horizontalshift{-1} 

\begin{scope}[yshift=\verticalshift,xshift=\horizontalshift] 

\draw [color=\customblue  ,draw opacity=1 ]   (73,261) .. controls (84.39,252.46) and (110.79,285.76) .. (164.75,297.75) .. controls (218.71,309.74) and (239,250) .. (344,274) .. controls (479,295) and (436.25,339.75) .. (560,348) ;
\draw [shift={(560,348)}, rotate = 3.81] [color=\customblue  ,draw opacity=1 ][fill=\customblue  ,fill opacity=1 ][line width=0.75]      (0, 0) circle [x radius= \radius, y radius= \radius]   ;
\draw [color=\customgreen  ,draw opacity=1 ]   (73,261) .. controls (79.75,253.25) and (106.75,275.75) .. (124.25,283.75) .. controls (184.25,301.25) and (173.75,230.25) .. (245.25,247.75) .. controls (329.75,274.75) and (338.94,256.63) .. (384.75,233.25) .. controls (420.97,214.77) and (453.76,177.96) .. (514.25,185.25) ;
\draw [shift={(514.25,185.25)}, rotate = 6.87] [color=\customgreen  ,draw opacity=1 ][fill=\customgreen  ,fill opacity=1 ][line width=0.75]      (0, 0) circle [x radius= \radius, y radius= \radius]   ;
\draw [color=\customred  ,draw opacity=1 ]   (73,261) .. controls (217,178) and (307,326) .. (530,270) ;
\draw [shift={(530,270)}, rotate = 345.9] [color=\customred  ,draw opacity=1 ][fill=\customred  ,fill opacity=1 ][line width=0.75]      (0, 0) circle [x radius= \radius, y radius= \radius]   ;

\draw[fill=black] (69.63,261) circle [x radius= \radius, y radius= \radius]   ;
\draw (27,235) node [anchor=north west][inner sep=0.75pt]    {$z^{0}$};
\draw (520,160) node [anchor=north west][inner sep=0.75pt]  [color=\customgreen  ,opacity=1 ]  {$\mc{Z}^{j}$};
\draw (537,245) node [anchor=north west][inner sep=0.75pt]  [color=\customred  ,opacity=1 ]  {$\mc{Z}^{i}$};
\draw (567,323) node [anchor=north west][inner sep=0.75pt]  [color=\customblue  ,opacity=1 ]  {$\mc{Z}^{k}$};

\end{scope}


\draw [color=\customgreen  ,draw opacity=1 ]   (72,481) .. controls (87.4,469.07) and (169.4,473.07) .. (236.2,479.87) .. controls (303.27,471.64) and (392.8,405.47) .. (514,406.17) ;
\draw [shift={(514,406.17)}, rotate = 0.33] [color=\customgreen  ,draw opacity=1 ][fill=\customgreen  ,fill opacity=1 ][line width=0.75]      (0, 0) circle [x radius= \radius, y radius= \radius]   ;
\draw [color=\customblue  ,draw opacity=1 ]   (72,481) .. controls (97.8,466.67) and (176.6,474.67) .. (306,485.87) .. controls (431,517.47) and (442.8,562.27) .. (560,569.17) ;
\draw [shift={(560,569.17)}, rotate = 3.37] [color=\customblue  ,draw opacity=1 ][fill=\customblue  ,fill opacity=1 ][line width=0.75]      (0, 0) circle [x radius= \radius, y radius= \radius]   ;
\draw [color=\customred  ,draw opacity=1 ]   (72,481) .. controls (112,451) and (491,520.17) .. (531,490.17) ;
\draw [shift={(531,490.17)}, rotate = 323.13] [color=\customred  ,draw opacity=1 ][fill=\customred  ,fill opacity=1 ][line width=0.75]      (0, 0) circle [x radius= \radius, y radius= \radius]   ;
\draw[fill=black] (68.63,481) circle [x radius= \radius, y radius= \radius]   ;
\draw (27,455) node [anchor=north west][inner sep=0.75pt]    {$z^{0}$};
\draw (519,380) node [anchor=north west][inner sep=0.75pt]  [color=\customgreen  ,opacity=1 ]  {$\mc{Z}^{j}$};
\draw (537,465) node [anchor=north west][inner sep=0.75pt]  [color=\customred  ,opacity=1 ]  {$\mc{Z}^{i}$};
\draw (566,543) node [anchor=north west][inner sep=0.75pt]  [color=\customblue  ,opacity=1 ]  {$\mc{Z}^{k}$};

\end{tikzpicture}

    \caption{Trajectories forming a tree-like structure (shown above) are optimal (with respect to problems \eqref{prb:gJmax}, \eqref{prb:l0min-i}, and \eqref{prb:l0min-best}) whereas the trajectories with irregular clumping (shown below) are not.}    
    \label{fig:tree-like}
\end{figure}
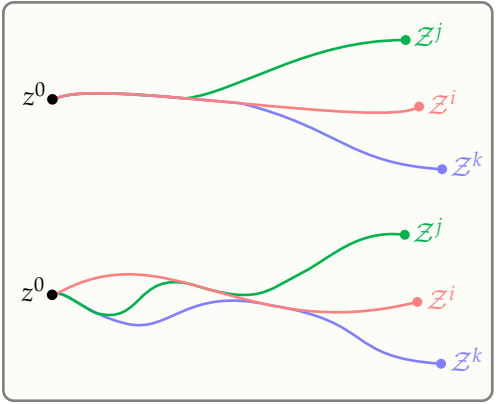
Given $i\in \bZr{1}{n}$, the problem of maximizing (in a cumulative sense) the number of reachable targets at each time instant from a trajectory to target $i$ can be described as follows
\begin{subequations}
\begin{align}
\underset{x^j_k,\,u^j_k}{\mr{minimize}}~&~\sum_{\genfrac{}{}{0pt}{1}{j\in [1:n]}{j\ne i}}\!\sum_{k=1}^{N} \znrm{x^i_k - x^j_k}, & & \label{eq:l0min-i-obj}\\
\mr{subject~to}~&~x^j_{k+1} = f(x^j_k,u^j_k), & &\!k\in\bZr{1}{N-1},\\
 &~x^j_k\in\bX,\,u^j_k\in\bU, & &\!k\in\bZr{1}{N-1},\\
 &~x^j_1 = z^0,~x^j_N\in \mc{Z}^j, & &\\
 &~j\in \bZr{1}{n}. & & \nonumber
\end{align}\label{prb:l0min-i}%
\end{subequations}
We can provide a mixed-integer representation for \eqref{prb:l0min-i} with binary variables, which we will explore in Section \ref{sec:soln-method}.

The following lemma relates the objective functions of \eqref{prb:opt-single-traj-i} and \eqref{prb:l0min-i}.
\begin{lemma}
Let trajectories $x^j_1,\ldots,x^j_N$, for $j\in \bZr{1}{n}$, form a solution to \eqref{prb:l0min-i}, and, for each $k\in\bZr{1}{N}$, let 
\begin{align}
    J_k \triangleq \big\{\,j\in \bZr{1}{n}\,\big|\,x^j_k = x^i_k\big\}.\label{eq:sets-l0min-i}    
\end{align}
Then the objective function \eqref{eq:l0min-i-obj} satisfies
\begin{align}
    \Bigg( \sum_{\genfrac{}{}{0pt}{1}{j\in [1:n]}{j\ne i}}\!\sum_{k=1}^{N} \znrm{x^i_k - x^j_k} \Bigg) + \sum_{k=1}^{N}|J_k| ={} & nN. \label{eq:l0min-i-obj-equiv-Jk}
\end{align}
\end{lemma}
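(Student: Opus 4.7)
The plan is to establish the identity pointwise in $k$ and then sum over $k\in\bZr{1}{N}$. Fix $k\in\bZr{1}{N}$. The definition \eqref{eq:sets-l0min-i} of $J_k$ partitions $\bZr{1}{n}$ into $J_k$ (indices $j$ with $x^j_k = x^i_k$) and its complement $\bZr{1}{n}\setminus J_k$ (indices with $x^j_k \ne x^i_k$). Note that $i\in J_k$ trivially, so $|J_k|\ge 1$ for every $k$.

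Next, I would exploit the fact that adding back the $j=i$ term in the inner sum is harmless, since $\znrm{x^i_k - x^i_k} = \znrm{0} = 0$ by the definition in the Notation section. Hence
\[
    \sum_{\genfrac{}{}{0pt}{1}{j\in [1:n]}{j\ne i}}\znrm{x^i_k - x^j_k} = \sum_{j\in [1:n]} \znrm{x^i_k - x^j_k}.
\]
Again by the definition of $\znrm{\cdot}$, the right-hand side counts exactly the indices $j\in\bZr{1}{n}$ for which $x^j_k \ne x^i_k$, i.e., the cardinality of $\bZr{1}{n}\setminus J_k$, which equals $n - |J_k|$.

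Summing this pointwise identity over $k\in\bZr{1}{N}$ gives
\[
    \sum_{k=1}^{N}\sum_{\genfrac{}{}{0pt}{1}{j\in [1:n]}{j\ne i}}\znrm{x^i_k - x^j_k} \;=\; \sum_{k=1}^{N}\bigl(n - |J_k|\bigr) \;=\; nN - \sum_{k=1}^{N}|J_k|,
\]
and rearranging yields \eqref{eq:l0min-i-obj-equiv-Jk}. The proof is essentially elementary counting; the only subtlety worth flagging is the step of extending the inner sum to include $j=i$ (whose contribution vanishes), which cleanly lines up the summation with the partition induced by $J_k$. I do not anticipate any serious obstacle.
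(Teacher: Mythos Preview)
Your proof is correct and follows essentially the same approach as the paper: both establish the pointwise identity $\sum_{j\ne i}\znrm{x^i_k-x^j_k}=n-|J_k|$ by elementary counting and then sum over $k$. The only cosmetic difference is that you explicitly extend the inner sum to include $j=i$ before counting, whereas the paper splits the sum directly over $J_k$ and its complement.
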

\begin{proof} For each $k\in\bZr{1}{N}$, we have that 
\[
    \znrm{x^i_k-x^j_k} = \begin{cases}
        0 &\text{if }j\in J_k,\\
        1 &\text{otherwise.}
    \end{cases}
\]
Then 
\begin{align}
    \sum_{\genfrac{}{}{0pt}{1}{j\in [1:n]}{j\ne i}}\znrm{x^i_k - x^j_k} = \sum_{j\in [1:n]\setminus J_k}\znrm{x^i_k - x^j_k} = n - |J_k|.\label{eq:relate-cost-l0-J}
\end{align}
Taking the summation of \eqref{eq:relate-cost-l0-J} over $k\in\bZr{1}{N}$ provides \eqref{eq:l0min-i-obj-equiv-Jk}.
\end{proof}
\begin{lemma} The monotonicity property in Lemma \ref{lem:monotonicity} holds for the sets of target indices defined in \eqref{eq:sets-l0min-i}, i.e., $J_{k} \subseteq J_{k-1}$, for $k\in\bZr{2}{N}$.
\end{lemma}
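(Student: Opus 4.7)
The plan is to argue by contradiction, and the idea is to perform a ``splice'' at the coincidence point to produce a strictly better feasible solution to \eqref{prb:l0min-i}.

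Suppose the monotonicity fails, i.e., there exist some $k\in\bZr{2}{N}$ and some $j\in J_k\setminus J_{k-1}$. The case $j=i$ is trivial because $i \in J_\ell$ for every $\ell$, so we may assume $j \neq i$. By the definition \eqref{eq:sets-l0min-i}, this means $x^j_k = x^i_k$ but $x^j_{k-1}\neq x^i_{k-1}$. I would construct a new trajectory $\tilde{x}^j_1,\ldots,\tilde{x}^j_N$ and corresponding control $\tilde{u}^j_1,\ldots,\tilde{u}^j_{N-1}$ as follows: for $\ell \in\bZr{1}{k-1}$, set $\tilde{x}^j_\ell = x^i_\ell$ and $\tilde{u}^j_\ell = u^i_\ell$, and for $\ell \in \bZr{k}{N}$ keep $\tilde{x}^j_\ell = x^j_\ell$ (with the corresponding controls $\tilde{u}^j_\ell = u^j_\ell$ for $\ell \leq N-1$). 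All other trajectories (including trajectory $i$) are left unchanged.

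The key step is verifying that this spliced trajectory is feasible for problem \eqref{prb:l0min-i}. The state and input constraints are inherited from the two trajectories being spliced. The dynamics are immediately satisfied at every time $\ell \neq k-1$, since both halves are individually dynamically feasible. At the splice point $\ell = k-1$, we need $f(\tilde{x}^j_{k-1},\tilde{u}^j_{k-1}) = \tilde{x}^j_k$; this holds because $f(x^i_{k-1},u^i_{k-1}) = x^i_k = x^j_k = \tilde{x}^j_k$, where the middle equality uses $j \in J_k$. The boundary conditions $\tilde{x}^j_1 = z^0$ and $\tilde{x}^j_N \in \mc{Z}^j$ are clearly preserved.

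It then remains to compare objective values. Only trajectory $j$ was modified, so only the $j$-th inner sum in \eqref{eq:l0min-i-obj} changes. For $\ell \in \bZr{k}{N}$, the contribution $\znrm{x^i_\ell - \tilde{x}^j_\ell} = \znrm{x^i_\ell - x^j_\ell}$ is unchanged. For $\ell \in \bZr{1}{k-1}$, we have $\tilde{x}^j_\ell = x^i_\ell$, hence $\znrm{x^i_\ell - \tilde{x}^j_\ell}=0$, while in the original the term at $\ell=k-1$ equaled $1$ (since $x^j_{k-1}\neq x^i_{k-1}$). Therefore the spliced solution achieves a strictly smaller objective value than the purportedly optimal one, yielding the contradiction. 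The main delicate point is making sure the splice preserves the dynamics at the single junction time $\ell=k-1$, which is precisely what the assumption $j \in J_k$ (i.e.\ coincidence at time $k$) enables.
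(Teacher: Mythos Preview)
Your proof is correct and follows essentially the same approach as the paper: assume a violation $j\in J_k\setminus J_{k-1}$, splice trajectory $j$ by replacing its prefix up to time $k-1$ with that of trajectory $i$, and observe that the resulting feasible family attains a strictly smaller value of \eqref{eq:l0min-i-obj}. Your write-up is in fact more explicit than the paper's---you spell out the control assignment at the splice and verify the dynamics at the junction $\ell=k-1$, whereas the paper simply states the replacement and asserts the objective decrease---but the underlying argument is identical.
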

\begin{proof} Let $x^j_1,\ldots,x^j_N$, for $j\in\bZr{1}{n}$, form a solution to \eqref{prb:l0min-i}, and sets $J_1,\ldots,J_N$ are given by \eqref{eq:sets-l0min-i}. Suppose that $J_{k}\setminus J_{k-1} \ne \emptyset$ for some $k\in\bZr{2}{N}$. Pick $j\in J_{k}\setminus J_{k-1}$. Then, $x^i_{k-1}\ne x^j_{k-1}$ and $x^i_k=x^j_k$. If $k=N$ replace the trajectory to target $j$ with the trajectory to target $i$, and if $k<N$ replace the trajectory to target $j$ with $x_1^i,\ldots,x_{k-1}^i,x_{k}^i,x^j_{k+1},\ldots,x^j_N$. Such a replacement reduces the value of objective function \eqref{eq:l0min-i-obj}. Therefore, we must have $J_{k}\setminus J_{k-1} = \emptyset$.
\end{proof}
\begin{theorem}\label{thm:l0min-i-equiv}If trajectories $x^j_1,\ldots,x^j_N$, for $j\in\bZr{1}{n}$, form a solution to \eqref{prb:l0min-i} and sets $J_1,\ldots,J_N$ are constructed via \eqref{eq:sets-l0min-i}, then $x^i_k,J_k$, for $k\in\bZr{1}{N}$, is a solution to \eqref{prb:opt-single-traj-i}. Conversely, if $x_k,J_k$, for $k\in\bZr{1}{N}$, is a solution to \eqref{prb:opt-single-traj-i}, then Corollary \ref{cor:construct-all-traj-from-single} provides a solution to \eqref{prb:l0min-i}. 
\end{theorem}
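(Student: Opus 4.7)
My plan is to prove each of the two implications by first checking feasibility of the constructed object in the target problem and then matching objective values via the identity \eqref{eq:l0min-i-obj-equiv-Jk}. Let $V^\star_1$ and $V^\star_2$ denote the optimal values of \eqref{prb:opt-single-traj-i} and \eqref{prb:l0min-i}, respectively; I plan to establish $V^\star_1+V^\star_2=nN$ and show that each of the two constructions attains these optima.

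For the direct statement, starting from an optimal $\{x^j_k\}_{j\in\bZr{1}{n}}$ and the sets $J_k$ defined via \eqref{eq:sets-l0min-i}, I would verify that $(x^i_k,J_k)$ is feasible for \eqref{prb:opt-single-traj-i}. The conditions $x^i_1=z^0$, $\{i\}\subseteq J_N$, and $x^i_k\in\cF_1(x^i_{k-1})$ follow directly from the dynamics constraints of \eqref{prb:l0min-i} and the definition of $J_k$. The substantive condition is $x^i_k\in\cB^{J_k}_{N-k}$: for each $j\in J_k$, the identity $x^i_k=x^j_k$ together with the feasible tail $x^j_k,x^j_{k+1},\ldots,x^j_N$ certifies $x^i_k\in\cB_{N-k}(\mc{Z}^j)$, and intersecting over $j\in J_k$ gives the claim. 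Since $i\in J_k$, $J_k$ is nonempty, and the preceding argument simultaneously shows $x^i_k\in\cR^{J_k}_k$, so $J_k\in\Lambda_k$. Invoking \eqref{eq:l0min-i-obj-equiv-Jk} shows the attained objective equals $nN-V^\star_2$, giving $V^\star_1\ge nN-V^\star_2$.

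For the converse statement, starting from an optimal $(x_k,J_k)$ of \eqref{prb:opt-single-traj-i}, the boundary condition $J_N\supseteq\{i\}$ combined with the monotonicity $J_k\subseteq J_{k-1}$ (which also holds for solutions of \eqref{prb:opt-single-traj-i}, per the remark following Lemma \ref{lem:monotonicity}) yields $i\in J_k$ for every $k$, i.e., $k^i=N$. Therefore Corollary \ref{cor:construct-all-traj-from-single} sets $x^i_k=x_k$ throughout, and for each $j$ gives $x^j_k=x_k=x^i_k$ whenever $k\le k^j$; by the definition of $k^j$ and monotonicity, this index set coincides with $\{k:j\in J_k\}$. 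Consequently, the objective of \eqref{prb:l0min-i} for the constructed trajectories is bounded above by $\sum_{j\ne i}(N-k^j)=nN-\sum_j k^j=nN-\sum_k|J_k|=nN-V^\star_1$, where the middle equality rests on the double-counting identity $\sum_j k^j=\sum_k|J_k|$, a direct consequence of $k\le k^j\iff j\in J_k$. Hence $V^\star_2\le nN-V^\star_1$, and combining the two inequalities gives $V^\star_1+V^\star_2=nN$, so both constructions are in fact optimal for their respective problems.

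The step I expect to be the main obstacle is verifying $x^i_k\in\cB^{J_k}_{N-k}$ in the direct direction, since it requires reinterpreting the feasible tail of the $j$th trajectory as a reachability certificate from $x^i_k$ to $\mc{Z}^j$ via the equality $x^j_k=x^i_k$, and this reinterpretation must hold simultaneously for every $j\in J_k$ to obtain the intersection. The converse bound is more mechanical, but the correctness of the final bookkeeping depends on the double-counting identity $\sum_j k^j=\sum_k|J_k|$ enabled by the monotonicity of $\{J_k\}$, which must be set up carefully.
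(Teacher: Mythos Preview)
Your proposal is correct and follows essentially the same strategy as the paper: both directions rest on the feasibility of the two constructions (via \eqref{eq:sets-l0min-i} and via Corollary \ref{cor:construct-all-traj-from-single}) together with the objective-value identity \eqref{eq:l0min-i-obj-equiv-Jk}. The only organizational difference is that you argue directly by sandwiching $V^\star_1+V^\star_2$ between $nN$ and $nN$, whereas the paper phrases each direction as a contradiction; your version is also more explicit about feasibility verification and about the inequality (rather than equality) in the converse bound, which the paper glosses over.
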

\begin{proof} 
Let trajectories $x^j_1,\ldots,x^j_N$, for $j\in\bZr{1}{n}$, solve \eqref{prb:l0min-i}, and let $\bar{x}_k,\bar{J}_k$, for $k\in\bZr{1}{N}$, solve \eqref{prb:opt-single-traj-i}. Then, we apply Corollary \ref{cor:construct-all-traj-from-single} to construct trajectories $\bar{x}^j_1,\ldots,\bar{x}^j_N$, for $j\in\bZr{1}{n}$, which are feasible with respect to \eqref{prb:l0min-i}. Suppose that $x^i_k,J_k$, for $k\in\bZr{1}{N}$, is feasible but not optimal with respect to \eqref{prb:opt-single-traj-i}, i.e.,
\[
    \sum_{k=1}^{N}|J_k| < \sum_{k=1}^{N}|\bar{J}_k|.
\]
From the equivalence of the objective functions \eqref{eq:opt-single-traj-obj} and \eqref{eq:l0min-i-obj}, shown via \eqref{eq:l0min-i-obj-equiv-Jk}, we conclude that trajectories $\bar{x}^j_1,\ldots,\bar{x}^j_N$, for $j\in\bZr{1}{n}$, lead to a strictly lower value of \eqref{eq:l0min-i-obj} than trajectories $x^j_1,\ldots,x^j_N$, for $j\in\bZr{1}{n}$. A contradiction.

Next, let $x_k,J_k$, for $k\in\bZr{1}{N}$, solve \eqref{prb:opt-single-traj-i}, and let trajectories $\bar{x}^j_1,\ldots,\bar{x}^j_N$, for $j\in\bZr{1}{n}$, solve \eqref{prb:l0min-i}. Then, $\bar{x}^i_k,\bar{J}_k$, for $k\in\bZr{1}{N}$, is feasible with respect to \eqref{prb:opt-single-traj-i}, where sets $\bar{J}_1,\ldots,\bar{J}_N$ are computed using \eqref{eq:sets-l0min-i}. Suppose that the trajectories constructed via Corollary \ref{cor:construct-all-traj-from-single} are feasible but not optimal with respect to \eqref{prb:l0min-i}. Therefore,
\[
    \sum_{\genfrac{}{}{0pt}{1}{j\in [1:n]}{j\ne i}}\!\sum_{k=1}^{N} \znrm{\bar{x}^i_k - \bar{x}^j_k} < \sum_{\genfrac{}{}{0pt}{1}{j\in [1:n]}{j\ne i}}\!\sum_{k=1}^{N} \znrm{x^i_k - x^j_k}
\]
From the equivalence of the objective functions \eqref{eq:opt-single-traj-obj} and \eqref{eq:l0min-i-obj} shown via \eqref{eq:l0min-i-obj-equiv-Jk}, we conclude that sets $\bar{J}_1,\ldots,\bar{J}_N$ lead to a strictly greater value of \eqref{eq:opt-single-traj-obj} than sets $J_1,\ldots,J_N$, which is a contradiction.
\end{proof}
\begin{remark}[Convex relaxation]\label{rem:cvx-relaxation}
The objective function \eqref{eq:l0min-i-obj} can be equivalently expressed using $0$-norm as follows
\begin{align*}
    & \sum_{\genfrac{}{}{0pt}{1}{j\in[1:n]}{j \ne i}} \sum_{k=1}^{N} \znrm{x^i_k - x^j_k} ={} \\
    & \sum_{k=1}^{N} \|\big( \|x^i_k - x^1_k\|_{p},\,\ldots,\,\|x^i_k - x^n_k\|_{p}\big)\|_0.
\end{align*}
where $\|\square\|_p$ denotes $p$-norm for any $p\ge 1$. Then a convex relaxation of \eqref{eq:l0min-i-obj} is given by
\begin{align*}
    & \sum_{k=1}^{N} \|\big( \|x^i_k - x^1_k\|_{p},\,\ldots,\,\|x^i_k - x^n_k\|_{p} \big)\|_1 ={} \\ 
    & \sum_{k=1}^{N} \sum_{\genfrac{}{}{0pt}{1}{j\in[1:n]}{j\ne i}}\|x^i_k - x^j_k\|_{p}. 
\end{align*}
In practice, choosing $p=1$ is desirable. That would encourage sparsity of the vector of concatenated pairwise differences of states: $(x_1^i-x_1^j,\ldots,x_N^i-x_N^j)$, for each $j\in\bZr{1}{n}\setminus\{i\}$. 
\end{remark}
\subsection{Maximize reachable targets from trajectory to an arbitrary target}
We can generalize \eqref{prb:l0min-i} by optimizing over $i\in \bZr{1}{n}$ to pick target $i^\star$. Among all targets, a trajectory to target $i^\star$ can have the maximum possible number of reachable targets at each time instant.
\begin{subequations}
\begin{align}
\underset{x^j_k,\,u^j_k}{\mr{minimize}}~&\min_{i\in[1:n]}\sum_{\genfrac{}{}{0pt}{1}{j\in[1:n]}{j\ne i}} \sum_{k=1}^{N} \znrm{x^i_k - x^j_k}, & & \label{eq:l0min-best-obj}\\
\mr{subject~to}~&~x^j_{k+1} = f(x^j_k,u^j_k), & &\!\!\!\!\!\!\!\!\!\!\!\!\!\!\!k\in\bZr{1}{N-1},\\
 &~x^j_k\in\bX,\,u^j_k\in\bU, & &\!\!\!\!\!\!\!\!\!\!\!\!\!\!\!k\in\bZr{1}{N-1},\\
 &~x^j_1 = z^0,~x^j_N \in \mc{Z}^j, & &\\
 &~j\in \bZr{1}{n}. & & \nonumber
\end{align}\label{prb:l0min-best}%
\end{subequations}
Given trajectories $x^j_1,\ldots,x^j_N$, for $j\in\bZr{1}{n}$, which solve \eqref{prb:l0min-best}, let 
\begin{align}
    J^\star_k \triangleq \big\{\,j\in\bZr{1}{n}\,\big|\,x^j_k = x^{i^\star}_k\big\},\label{eq:sets-l0min-best}   
\end{align}
for each $k\in\bZr{1}{N}$, where 
\begin{align}
i^\star \in \underset{i\in[1:n]}{\mr{argmin}}  \sum_{\genfrac{}{}{0pt}{1}{j\in[1:n]}{j\ne i}} \sum_{k=1}^{N} \znrm{x^i_k - x^j_k}.\label{eq:istar-l0min-best} 
\end{align}
\begin{remark}\label{rem:take-min-i-outside} Solving \eqref{prb:l0min-best} is equivalent to solving \eqref{prb:l0min-i} for each target $i\in\bZr{1}{n}$ and picking the one which leads to least value for \eqref{eq:l0min-i-obj}. Similarly, solving \eqref{prb:opt-single-traj} is equivalent to solving \eqref{prb:opt-single-traj-i} for each $i\in\bZr{1}{n}$ in the boundary condition \eqref{eq:opt-pick-targ-i}, and selecting the best solution.
\end{remark}
\begin{theorem} Let trajectories $x^j_1,\ldots,x^j_N$, for $j\in\bZr{1}{n}$, solve \eqref{prb:l0min-best}. Sets $J^\star_1,\ldots,J^\star_N$ and $i^\star$ are given by \eqref{eq:sets-l0min-best} and \eqref{eq:istar-l0min-best}, respectively. Then $x^{i^\star}_k,J^\star_k$, for $k\in\bZr{1}{N}$, is a solution to \eqref{prb:opt-single-traj}. Conversely, if $x_k,J_k$, for $k\in\bZr{1}{N}$, is a solution to \eqref{prb:opt-single-traj}, then Corollary \ref{cor:construct-all-traj-from-single} provides a solution to \eqref{prb:l0min-best}.
\end{theorem}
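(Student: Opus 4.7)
The plan is to leverage Remark \ref{rem:take-min-i-outside} to reduce the statement to the fixed-target case already handled by Theorem \ref{thm:l0min-i-equiv}. The key observation is that both \eqref{prb:l0min-best} and \eqref{prb:opt-single-traj} differ from their fixed-target counterparts, \eqref{prb:l0min-i} and \eqref{prb:opt-single-traj-i}, only by an outer selection over the terminal target. Furthermore, relation \eqref{eq:l0min-i-obj-equiv-Jk} guarantees that the ordering of targets induced by the $\ell_0$-style objective is exactly the reverse of that induced by the cardinality-sum objective, so the ``best'' target chosen on either side is the same.

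For the forward direction, I would first fix an optimal outer target $i^\star$ as in \eqref{eq:istar-l0min-best} and observe via Remark \ref{rem:take-min-i-outside} that the given trajectories $x^j_1,\ldots,x^j_N$, for $j\in\bZr{1}{n}$, must also solve \eqref{prb:l0min-i} with $i = i^\star$. Theorem \ref{thm:l0min-i-equiv} then immediately yields that $x^{i^\star}_k, J^\star_k$, for $k\in\bZr{1}{N}$, solves \eqref{prb:opt-single-traj-i} with $i = i^\star$. To upgrade this to optimality for \eqref{prb:opt-single-traj}, I would argue by contradiction: if some other target $i'$ admitted a feasible pair with $\sum_k |J_k'| > \sum_k |J^\star_k|$, then via \eqref{eq:l0min-i-obj-equiv-Jk} and the forward part of Theorem \ref{thm:l0min-i-equiv} the induced $\ell_0$ objective value for target $i'$ would be strictly smaller than that for $i^\star$, contradicting \eqref{eq:istar-l0min-best}.

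For the converse, given a solution $x_k, J_k$ to \eqref{prb:opt-single-traj}, Remark \ref{rem:take-min-i-outside} identifies a target $i \in J_N$ for which this pair also solves \eqref{prb:opt-single-traj-i}. Applying the converse part of Theorem \ref{thm:l0min-i-equiv} and Corollary \ref{cor:construct-all-traj-from-single} produces trajectories that solve \eqref{prb:l0min-i} with this $i$. A final application of Remark \ref{rem:take-min-i-outside}, combined with \eqref{eq:l0min-i-obj-equiv-Jk} to confirm that this $i$ is $\ell_0$-optimal among all target choices, completes the proof.

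The main obstacle is the bookkeeping between $i^\star$ (chosen to minimize the $\ell_0$ objective) and the target that maximizes the cardinality-sum objective; the identity \eqref{eq:l0min-i-obj-equiv-Jk} provides the bridge, but care is required when the argmin in \eqref{eq:istar-l0min-best} is not a singleton, since any selected $i^\star$ should yield a valid solution to \eqref{prb:opt-single-traj}. Because \eqref{eq:l0min-i-obj-equiv-Jk} is an exact equality (not merely an inequality), every $\ell_0$-minimizing target attains the same cardinality-sum value, so this ambiguity is harmless. Apart from this, the proof is essentially a bookkeeping exercise on top of Theorem \ref{thm:l0min-i-equiv}.
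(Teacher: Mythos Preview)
Your proposal is correct and follows the same approach the paper intends: the paper's proof consists of a single sentence invoking Remark~\ref{rem:take-min-i-outside} and stating that the argument then parallels Theorem~\ref{thm:l0min-i-equiv}, which is precisely the reduction you spell out. Your treatment of the non-uniqueness of $i^\star$ via the exact identity~\eqref{eq:l0min-i-obj-equiv-Jk} is a nice bit of care that the paper leaves implicit.
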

\begin{proof}
With the observation in Remark \ref{rem:take-min-i-outside}, the proof is similar to that of Theorem \ref{thm:l0min-i-equiv}.
\end{proof}
\begin{remark}
The branch times and branch points (described in Definition \ref{def:branch-kj}) can be computed for the solutions to \eqref{prb:l0min-i} and \eqref{prb:l0min-best} using the targets sets in \eqref{eq:sets-l0min-i} and \eqref{eq:sets-l0min-best}, respectively. The branch time $k^j$ is the time after which the trajectory to target $j$ ``detaches'' from the trajectory to target $i$ (or $i^\star$).
\end{remark}
We have established a tree-like structure for the trajectories that solve the optimization problems in Section \ref{sec:card-min} and highlighted their connection to the monotonicity of sets $J_1,\ldots,J_N$, which solve the optimization problems in Section \ref{sec:reach}. In other words, it is not optimal for trajectories to clump together arbitrarily as shown in Figure \ref{fig:tree-like}.
\section{Solution Method}\label{sec:soln-method}
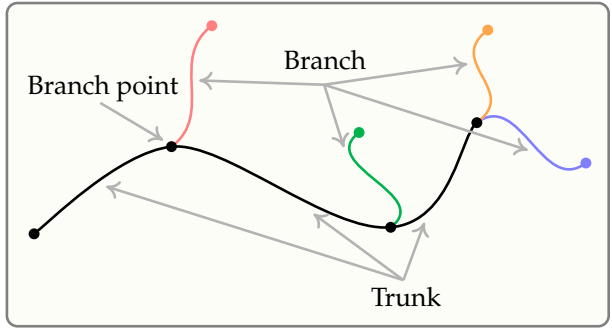
\begin{figure}[!ht]
\centering



\begin{tikzpicture}[x=0.75pt,y=0.75pt,yscale=-1,xscale=1,scale=0.55]

\def\radius{5}

\node[rectangle, fill=beige!20, draw=black!50, minimum width=8cm, minimum height=4.3cm, rounded corners, line width=1pt] at (365,237) {};

\tikzset{
    customArrow/.style={-{To[length=5pt, width=7pt]}}
}

\draw [color=\customblue  ,draw opacity=1 ]   (520,198.5) .. controls (560,168.5) and (580,265.5) .. (620,235.5) ;
\draw  [draw opacity=0][fill=\customblue  ,fill opacity=1 ] (620,235.5) circle [x radius=\radius, y radius=\radius];

\draw [color=\customorange  ,draw opacity=1 ]   (520,198.5) .. controls (560,168.5) and (490,143.5) .. (530,113.5) ;
\draw  [draw opacity=0][fill=\customorange  ,fill opacity=1 ] (530,113.5) circle [x radius=\radius, y radius=\radius] ;

\draw [color=\customred  ,draw opacity=1 ]   (240,220.5) .. controls (280,190.5) and (237,139.5) .. (277,109.5) ;
\draw [draw opacity=0][fill=\customred  ,fill opacity=1 ] (277,109.5) circle [x radius=\radius, y radius = \radius] ;

\draw [color=\customgreen  ,draw opacity=1 ]   (441,294.5) .. controls (481,264.5) and (372,237.5) .. (412,207.5) ;
\draw  [draw opacity=0][fill=\customgreen  ,fill opacity=1 ] (412,207.5) circle [x radius=\radius, y radius=\radius] ;

\draw    (114,300.5) .. controls (126.01,291.5) and (191.3,225.53) .. (240,220.5) .. controls (288.7,215.47) and (383.9,299.42) .. (441,294.5) .. controls (498.1,289.58) and (504.94,209.8) .. (520,198.5) ;
\draw [draw opacity=0][fill=black ,fill opacity=1 ] (520,198.5) circle [x radius=\radius, y radius=\radius] ;
\draw [draw opacity=0][fill=black ,fill opacity=1 ] (114,300.5) circle [x radius=\radius, y radius=\radius] ;
\draw [draw opacity=0][fill=black ,fill opacity=1 ] (441,294.5) circle [x radius=\radius, y radius=\radius] ;
\draw [draw opacity=0][fill=black ,fill opacity=1 ] (240,220.5) circle [x radius=\radius, y radius=\radius] ;

\draw [color=black!30  ,draw opacity=1, customArrow ]   (380,163.67) -- (398.07,221.09) ;

\draw [color=black!30  ,draw opacity=1, customArrow ]   (380,163.67) -- (511.35,144.62) ;

\draw [-, color=black!30  ,draw opacity=1, customArrow ]   (380,163.67) -- (566.76,223.72) ;

\draw [color=black!30  ,draw opacity=1, customArrow ]   (380,163.67) -- (265.33,159.74) ;

\draw [color=black!30  ,draw opacity=1, customArrow ]   (452.67,343) -- (179.91,256.94) ;

\draw [color=black!30  ,draw opacity=1, customArrow ]   (452.67,343) -- (370.28,282.19) ;

\draw [color=black!30  ,draw opacity=1, customArrow ]   (452.67,343) -- (471.98,290.21) ;

\draw [color=black!30  ,draw opacity=1, customArrow ]   (174.67,180.33) -- (232.28,214.64) ;

\draw (340,129) node [anchor=north west][inner sep=0.75pt]   [align=left] {Branch};
\draw (420,346) node [anchor=north west][inner sep=0.75pt]   [align=left] {Trunk};
\draw (105,152) node [anchor=north west][inner sep=0.75pt]   [align=left] {Branch point};

\end{tikzpicture}

\caption{Algorithms \ref{alg:ddtoqcvx} and \ref{alg:ddtoscp} recursively compute trunk and branch trajectories connected by branch points while adhering to the given target prioritization.}
\label{fig:branch-trunk}
\end{figure}
This section describes three solution methods for {\ddto} that either numerically solve optimization problems in Section \ref{sec:card-min} under special cases, or generate solutions with the tree structure in the general setting. Two of the solution methods are specialized for discrete-time affine systems subject to convex constraints on the state and control input. The first method, {\ddtoqcvx}, is based on the computation of the latest branch time (via \eqref{prb:gJmax}) by solving quasiconcave maximization problems. Since \eqref{prb:gJmax} is agnostic to prioritization of targets, we adopt a recursive approach described in Algorithm \ref{alg:ddtoqcvx}, wherein the latest branch times for shrinking collections of targets are sequentially computed. At each branch time, at least one target is rejected based on a given priority order: $\lambda^k$, for $k\in\bZr{1}{n}$, where $\lambda^1$ denotes the highest priority target and $\lambda^n$ denotes the least priority. Two consecutive branch points are connected by a trajectory segment called \textit{trunk}, where the trajectories to a collection of targets are coincident. At each branch time, the number of trajectories in the trunk reduces by at least one and the trajectory segment which ``detaches'' from the trunk to terminate at the ``rejected'' target is called a \textit{branch}. The sequence of branches and trunks possess a tree-like structure. See Figure \ref{fig:branch-trunk} for an illustration of branches and trunks. Note that it is possible for two successive branch times to coincide. In which case, more than one target is rejected at the same branch time. We refer the reader to \cite{elango2022deferring} for an extension to Algorithm \ref{alg:ddtoqcvx} with adaptive update of the constraints on cumulative trajectory cost.\\ The second method, {\ddtomicp}, consists of an MICP reformulation of \eqref{prb:l0min-i} (along with its convex relaxation), which are solved with off-the-shelf MICP solvers and (continuous) convex optimization solvers. \\ The final solution method, {\ddtoscp}, is designed for a more general setting: continuous-time nonlinear systems with nonconvex path constraints. It borrows the recursive approach and mimics the tree structure of the trajectories in {\ddtoqcvx}, as shown in Algorithm \ref{alg:ddtoscp}.
\subsection{{\ddtomicp}}\label{subsec:ddtomicp}
For an affine dynamical system with convex state and control input constraints, \eqref{prb:l0min-i} can be written as an MICP with binary variables. In the development thus far, we assumed that the trajectories to all targets have the same horizon length, with the understanding that embedded time-dilation (described in Appendix \ref{app:dilation}) can allow for different final times for each of the trajectories. However, time-dilation introduces nonlinearity in the system dynamics. So, to avoid time-dilation and preserve convexity, we allow the trajectories to different targets to have different horizon lengths, i.e., $N^j$ for $j\in\bZr{1}{n}$. Similarly, since the cumulative trajectory constraint function is typically nonlinear, we avoid augmenting such a constraint into the system dynamics (with the approach in Remark \ref{rem:aug-sys-cost}) to preserve convexity.
\begin{assumption}\label{asm:traj-diff-bnd}
The norm of the difference between states at each time on any two trajectories is bounded above by $M>0$.
\end{assumption}
For each $j\in \bZr{1}{n}$, let $x^j_1,\ldots,x^j_N$ be a trajectory to $\mc{Z}^j$. The key step in formulating an MICP is as follows. Let $i\in\bZr{1}{n}$, $p\ge 1$ and $N^{ij} \triangleq \min\{N^i,N^j\}$, for $j\in\bZr{1}{n}$. For each $j\in\bZr{1}{n}\setminus \{i\}$ and $k\in\bZr{1}{N^{ij}}$ we use a binary variable $\zeta^j_k\in\{0,1\}$ to model the implication 
\[
    \|x^i_k-x^j_k\|_p > 0 \implies \zeta^j_k = 1,
\]
as follows
\begin{align}
    \|x^i_k-x^j_k\|_p \le \zeta^j_k M.\label{eq:micp-cnstr}
\end{align}
Due to Assumption \ref{asm:traj-diff-bnd}, imposing \eqref{eq:micp-cnstr} in an optimization problem will not cause infeasibility. \\ The MICP representation for \eqref{prb:l0min-i} in the convex case is given by
\begin{subequations}
\begin{align}
    \underset{x^j_k,\,u^j_k,\,\zeta^j_k}{\mr{minimize}}~&\sum_{\genfrac{}{}{0pt}{1}{j\in [1:n]}{j\ne i}} \sum_{k=1}^{N^{ij}} \zeta^j_k, & &\\
    \mr{subject~to}~&x^j_{k+1} = Ax^j_k + Bu^j_k + c, & &\hspace{-0.9cm}k\in\bZr{1}{N^j-1},\\
    &x^j_k\in\bX,\,u^j_k\in\bU, & &\hspace{-0.9cm}k\in\bZr{1}{N^j-1},\\
    &\sum_{k=1}^{N^j-1} l(x^j_k,u^j_k) \le l_{\max}, & & \\
    &x^j_1 = z^0,~x^j_N\in \mc{Z}^j, & &\\
    &j\in\bZr{1}{n}, & & \nonumber\\
    &\|x^i_k-x^j_k\|_p \le \zeta^j_k M, & &\\
    &\zeta^j_k\in\{0,1\}, & &\\
    &k\in\bZr{1}{N^{ij}},~j\in\bZr{1}{n}\setminus\{i\}, & & \nonumber
\end{align}\label{prb:micp-i}%
\end{subequations}
where $A\in\bR^{n_x\times n_x}$, $B\in\bR^{n_x\times n_u}$, and $c\in\bR^{n_x}$ define the affine dynamical system. We can use efficient state-of-the-art MICP solvers such as MOSEK and Gurobi to solve \eqref{prb:micp-i}. The tree structure established in Section \ref{sec:card-min} for a solution to \eqref{prb:l0min-i} ensures that $\zeta^j_k$ monotonically increases from $0$ to $1$. As an alternative to directly solving an MICP, we can efficiently obtain an approximate solution for \eqref{prb:micp-i} using a conic optimization solver by replacing $\zeta^j_k\in\{0,1\}$ in \eqref{prb:micp-i} with its convex relaxation: $\zeta^j_k\in[0,1]$. 
\subsection{{\ddtoqcvx}}
\begin{lemma} For an affine dynamical system with convex state and control input constraints, \eqref{prb:gJmax} is a quasiconvex optimization problem, i.e., when $f$ is an affine function, constraint sets $\mathbb{X}$, $\mathbb{U}$, and targets $\mc{Z}^j$, for $j\in\bZr{1}{n}$, are closed and convex. 
\end{lemma}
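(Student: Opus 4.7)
The plan is to verify the two defining ingredients of a quasiconvex program (in the maximization form): a convex feasible set, and a quasiconcave objective. Since quasiconvexity reduces in both cases to showing that certain sublevel/superlevel sets are convex, the argument will be purely a matter of unpacking the definitions in \eqref{prb:gJmax} and \eqref{eq:gJ}.

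First I would verify convexity of the feasible set. With $f$ affine, the dynamics $x^j_{k+1}=f(x^j_k,u^j_k)$ become affine equality constraints in the decision variables $(x^j_k,u^j_k)$; the pointwise constraints $x^j_k\in\bX$ and $u^j_k\in\bU$ describe convex sets; the initial condition $x^j_1=z^0$ is affine; and the terminal condition $x^j_N\in\mc{Z}^j$ is convex by hypothesis. Since all constraints decouple across $j\in J$, the overall feasible set is the intersection of finitely many convex and affine sets, hence convex.

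Next I would show that $\gfunc^{|J|}$, as defined in \eqref{eq:gJ}, is quasiconcave. Because $\gfunc^{|J|}$ only takes integer values in $\bZr{1}{N}$, it suffices to show that for each integer $\alpha\in\bZr{1}{N}$, the superlevel set
\[
    S_\alpha \triangleq \bigl\{X^J\,\big|\,\gfunc^{|J|}(X^J)\ge\alpha\bigr\}
\]
is convex. From \eqref{eq:gJ}, $\gfunc^{|J|}(X^J)\ge\alpha$ holds iff there exists $k'\ge\alpha$ such that $x^j_k=x^i_k$ for all $k\in\bZr{1}{k'}$ and all $i,j\in J$, which in turn holds iff $x^j_k=x^i_k$ for all $k\in\bZr{1}{\alpha}$ and all $i,j\in J$. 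Hence $S_\alpha$ is cut out by finitely many affine equality constraints, and is therefore an affine subspace (in particular, convex). For non-integer $\alpha$ one uses $S_\alpha=S_{\lceil\alpha\rceil}$. This establishes that $\gfunc^{|J|}$ is quasiconcave on the ambient decision space and hence on the feasible set of \eqref{prb:gJmax}.

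Combining the two observations, \eqref{prb:gJmax} is the maximization of a quasiconcave function over a convex set, i.e., a quasiconvex optimization problem in the sense usually adopted in the literature (a program whose sublevel/superlevel sets of objective intersected with the constraint set are convex, and hence solvable by bisection over the quasiconcavity parameter together with a convex feasibility subproblem). The main subtlety — and the only place care is needed — is the translation between the ``max over $k'$'' in \eqref{eq:gJ} and the single affine description of $S_\alpha$: one must observe that the coincidence condition up to time $k'$ implies coincidence up to any earlier time, so that the superlevel set reduces to coincidence up to exactly time $\alpha$. No further estimates or calculations are required.
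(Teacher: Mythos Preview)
Your proposal is correct and follows essentially the same route as the paper: convexity of the feasible set from affine dynamics and convex constraint sets, and quasiconcavity of $\gfunc^{|J|}$ from the observation that each superlevel set $\{\gfunc^{|J|}(X^J)\ge k^\star\}$ is cut out by the affine equalities $x^j_k=x^i_k$. The only cosmetic difference is that the paper goes one step further and rewrites those state-coincidence equalities in terms of the control inputs via the affine dynamics, obtaining $\sum_{m=1}^{k-1}A^{k-1-m}B(u^j_m-u^i_m)=0$; this is not needed for the argument, and your version (leaving the constraints in the state variables) is equally valid and arguably cleaner.
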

\begin{proof} When $f$ is an affine function, $\mathbb{X}$ and $\mathbb{U}$ are closed convex sets, the feasible set of \eqref{prb:gJmax} is convex. The objective function \eqref{eq:gJmax-qcvx-obj} is quasiconcave because its superlevel sets are convex sets (hyperplanes), i.e.,
\begin{align*}
     & \gfunc^{|J|}(X^J) \ge k^\star\\
\iff & x^j_k = x^i_k,~\forall\,k\in\bZr{1}{k^\star},~i,j\in J\\
\iff & \sum_{m=1}^{k-1}A^{k-1-m}B(u^j_m-u^i_m)=0,~\forall\,k\in\bZr{1}{k^\star},~i,j\in J
\end{align*}
\end{proof}
We can efficiently solve \eqref{prb:gJmax} via bisection method, which solves a sequence of convex feasibility problems \cite[Sec. 3]{agrawal2020disciplined}, \cite[Alg. 1]{elango2022deferring}. The sequence is guaranteed to terminate within a fixed number of iterations. 

Similar to {\ddtomicp}, we preserve convexity by 1) allowing different horizon lengths for each of the trajectories (to avoid time-dilation), and 2) explicitly imposing the cumulative trajectory constraint. Then, under the {\ddtoqcvx} specialization, the solution method described in Algorithm \ref{alg:ddtoqcvx} recursively solves the following quasiconvex problem.
\begin{subequations}
\begin{align}
    \underset{x^j_k,\,u^j_k}{\mr{maximize}}~&\gfunc^{|J|}(X^J), & & \\
    \mr{subject~to}~&x^j_{k+1}\!=\!Ax^j_k+Bu^j_k+c, & &\hspace{-0.1cm}k\in\bZr{1}{M^j\!-\!1},\label{eq:ddto-qcvx-dyn}\\
    &x^j_k\in\bX,~u^j_k\in\bU, & &\hspace{-0.1cm}k\in\bZr{1}{M^j\!-\!1},\\
    &\sum_{k=1}^{M^j-1}l(x^j_k,u^j_k) \le l_{\max}, & &\label{eq:ddto-qcvx-cumcostcnstr}\\
    &x_1^j = z^0,~x^j_{M^j}\in \mc{Z}^j, & &\\
    &j\in J, & & \nonumber
\end{align}\label{prb:ddto-qcvx}%
\end{subequations}
where the trajectories to the targets in $J$ are concatenated into vector $X^J$. The set of target indices $J\subseteq\bZr{1}{n}$, horizon lengths $M^j$, for $j\in J$, and the initial state $z^0$, are updated after each branch time computation. The optimal value of \eqref{prb:ddto-qcvx} is a branch time and the segment of any of the trajectories until the branch time forms a trunk.
\subsection{{\ddtoscp}}
Consider a continuous-time nonlinear dynamical system
\begin{align}
    \dot{x}(t) ={} F(x(t),u(t)),\quad t\in[0,\tfinal].
\end{align}
The state and input constraints: $x(t)\in\bX$ and $u(t)\in\bU$ can be re-expressed as path constraints: $g_i(x(t),u(t)) \le 0$, for $i\in\bZr{1}{n_g}$, and $h_j(x(t),u(t)) = 0$, for $j\in\bZr{1}{n_h}$, where $g_i$ and $h_j$ are scalar-valued functions. We consider a free-final-time optimal control problem, i.e., the final time $\tfinal$ is a decision variable. So, we adopt time-dilation \cite[Sec. 2.4]{elango2024successive} which treats the actual time $t$ as a continuously differentiable, strictly-increasing mapping from a known normalized interval $[0,1]$ to the actual time interval $[0,\tfinal]$. The dilation factor, given by
\begin{align}
    s(\tau) \triangleq{} & \derv{t}(\tau) = \frac{\mr{d}t(\tau)}{\mr{d}\tau},\quad \tau\in[0,1],
\end{align}
is treated as an additional control input. Next, the path constraints are subjected to the isoperimetric reformulation \cite[Sec. 2.3]{elango2024successive}. For any $i\in\bZr{1}{n_g}$ and $j\in\bZr{1}{n_h}$, we have that
\begin{align*}
    &\!g_i(x(t),u(t)) \le 0,~h_j(x(t),u(t))=0,\quad\forall\,t\in[0,\tfinal],\\
    \iff &\!\int_0^{\tfinal}\normplus{g_i(x(t),u(t))}^2+h_j(x(t),u(t))^2\mr{d}t = 0.
\end{align*}
For each $\tau\in[0,1]$, we augment the state and control input as follows
\begin{align*}
    \tilde{x}(\tau) \triangleq{} & (x(t(\tau)),y(\tau),t(\tau)),\\
    \tilde{u}(\tau) \triangleq{} & (u(t(\tau)),s(\tau)),
\end{align*}
to obtain the following augmented dynamical system defined over interval $[0,1]$
\begin{align}
    \derv{\tilde{x}} ={} & s\begin{bmatrix}
        F(x,u)\\
        \displaystyle\sum_{i=1}^{n_g}\normplus{g_i(x,u)}^2+\sum_{j=1}^{n_h}h_j(x,u)^2\\
        1        
    \end{bmatrix} = \tilde{F}(\tilde{x},\tilde{u}).\label{eq:aug-sys-ct}
\end{align}
Imposing periodicity boundary conditions on $y$ is equivalent to satisfying the path constraints in continuous-time. The above constraint reformulation approach is especially useful within direct methods for trajectory optimization to avoid inter-sample constraint violation which is commonly encountered after discretization and imposing path constraint at finitely-many node points. We refer the reader to \cite{elango2024successive} for detailed discussions.

Given a set of target indices $J\subseteq\bZr{1}{n}$, initial state $z^0$, and trajectory horizon length $M$, {\ddtoscp} considers the following continuous-time optimal control problem for $|J|+1$ trajectories: trunk trajectory $x^0$, defined over interval $[0,\tfinal^0]$, and branch trajectories $x^j$, defined over interval $[0,\tfinal^j]$, for each $j\in J$. 
\begin{subequations}
\begin{align}
    \underset{\tilde{x}^j,\,\tilde{u}^j}{\mr{maximize}}~&\selector{t}\tilde{x}^0(1)\\
    \mr{subject~to}~&\derv{\tilde{x}}{}^j(\tau)\!=\tilde{F}(\tilde{x}^j(\tau),\tilde{u}^j(\tau)),& &\hspace{-1.22cm}\tau\in[0,1],\label{eq:ddto-scp-dyn}\\
    & \tilde{u}^j(\tau) \in \tilde{\mc{U}}, & &\hspace{-1.22cm}\tau\in[0,1],\\
    & \selector{y}\tilde{x}^j(0) = \selector{y}\tilde{x}^j(1), & &\label{eq:ddto-scp-cnstrbc}\\
    & j\in J\cup\{0\}, & & \nonumber\\
    & \selector{x}\tilde{x}^j(0) = \selector{x}\tilde{x}^0(1), & & \\
    & P^j(\selector{x}\tilde{x}^j(1)) \le 0,\,Q^j(\selector{x}\tilde{x}^j(1)) = 0,& & \label{eq:ddto-scp-bc}\\
    & j\in J, & & \nonumber\\
    & \selector{x}\tilde{x}^0(0) = z^0. & &    
\end{align}\label{prb:ddto-scp}%
\end{subequations}
Since the targets can be nonconvex sets, we express the terminal boundary condition constraints using continuously differentiable constraint functions $P^j:\bR^{n_x}\to\bR^{n_P}$ and $Q^j:\bR^{n_x}\to\bR^{n_Q}$. Note that a cumulative constraint on the trajectory is transformed to a terminal constraint in \eqref{eq:ddto-scp-bc} through an approach similar to that in Remark \ref{rem:aug-sys-cost}. By construction, $\tfinal^0$ is the branch time and the goal is to maximize it. Convex constraints on the control input are encoded through the convex and compact set $\tilde{\mc{U}}$. We use selector matrices $\selector{x}$, $\selector{y}$, and $\selector{t}$ to select components of $\tilde{x}$ corresponding to $x$, $y$, and $t$, respectively. We assume that all functions appearing in \eqref{prb:ddto-scp} are continuously differentiable. Note that, in contrast to {\ddtoqcvx}, there is an explicit distinction between the trunk and branch trajectories in \eqref{prb:ddto-scp}.

Given a solution to \eqref{prb:ddto-scp}, the trajectory from $z^0$ to target $j\in J$, defined over interval $[0,\tfinal^0+\tfinal^j]$, is obtained as follows
\begin{align*}
    x^j(t) \triangleq{} & \!\!\begin{cases}
                    \selector{x}\tilde{x}^0((\selector{t}\tilde{x}^0)^{-1}(t))&\text{if }t\in[0,\tfinal^0],\\
                    \selector{x}\tilde{x}^j( (\selector{t}\tilde{x}^j)^{-1}(t-\tfinal^0))&\text{if }t\in(\tfinal^0,\tfinal^0+\tfinal^j],
                 \end{cases}
\end{align*}
where $\tfinal^j = \selector{t}\tilde{x}^j(1)$. Note that $\selector{t}\tilde{x}^j : [0,1]\to [0,\tfinal^j]$ is invertible since it is a strictly increasing function.

Next, to numerically solve \eqref{prb:ddto-scp}, we discretize $[0,1]$ with a uniformly-spaced grid of length $M$: $0=\tau_1<\ldots<\tau_M=1$. For each $j\in J\cup\{0\}$, the values of the augmented state and control input at the nodes of the grid: $\tilde{x}^j_1,\ldots,\tilde{x}^j_M$ and $\tilde{u}^j_1,\ldots,\tilde{u}^j_{M}$, respectively, are treated as decision variables. We use a zero-order-hold parametrization for the augmented control input, $\tilde{\nu}^j : [0,1]\to\bR^{n_u+1}$, given by $\tilde{\nu}^j(\tau) = \tilde{u}^j_k$, whenever $\tau\in[\tau_k,\tau_{k+1})$ for some $k\in\bZr{1}{M-1}$, and $\tilde{\nu}^j(1) = \tilde{u}^j_M$. \\ Then, for each $k\in\bZr{1}{M-1}$, the exact discretization of \eqref{eq:ddto-scp-dyn} via multiple shooting \cite{bock1984multiple} is  
\begin{align}
    \tilde{x}^j_{k+1} ={} & \tilde{f}(\tilde{x}^j_k,\tilde{u}^j_k) \triangleq \tilde{x}^j_k+\int_{\tau_{k}}^{\tau_{k+1}}\tilde{F}({}^k\tilde{x}^j(\tau),\tilde{\nu}^j(\tau))\mr{d}\tau,
\end{align}
where ${}^k\tilde{x}^j$ is a trajectory for \eqref{eq:ddto-scp-dyn} over $[\tau_k,\tau_{k+1}]$ with control input $\tilde{\nu}^j$ and initial condition $\tilde{x}^j_k$.

We obtain the following nonconvex optimization problem after time-dilation, constraint reformulation, augmented control input parametrization, and multiple-shooting discretization.
\begin{subequations}
\begin{align}
    \underset{\tilde{x}^j_k,\,\tilde{u}^j_k}{\mr{maximize}}~&~\selector{t}\tilde{x}^0_M & & \\
    \mr{subject~to}~&~\tilde{x}_{k+1}^j = \tilde{f}(\tilde{x}^j_k,\tilde{u}^j_k), & &\hspace{-1.36cm}k\in\bZr{1}{M\!-\!1},\\
    &~\tilde{u}^j_k\in\tilde{\mc{U}}, & &\hspace{-1.36cm}k\in\bZr{1}{M\!-\!1},\\
    &~\selector{y}(\tilde{x}^j_{k+1}-\tilde{x}^j_{k})\le \epsilon, & &\hspace{-1.36cm}k\in\bZr{1}{M\!-\!1},\label{eq:ddto-scp-disc-cnstrrelax}\\
    &~j\in J\cup\{0\}, & & \nonumber\\
    &~\selector{x}\tilde{x}^j_1 = \selector{x}\tilde{x}^0_M, & &\\
    &~P^j(\selector{x}\tilde{x}^j_M) \le 0,\,Q^j(\selector{x}\tilde{x}^j_M) = 0, & &\\
    &~j\in J, & & \nonumber\\
    &~\selector{x}\tilde{x}^0_1 = z^0. & &
\end{align}\label{prb:ddto-scp-disc}%
\end{subequations}
The augmented trajectory to target $j\in J$ is represented with $\tilde{x}^0_1,\ldots,\tilde{x}^0_M,\tilde{x}^j_2,\ldots,\tilde{x}^j_{M}$. Let  $N \triangleq 2M-1$ denote the total horizon length. The corresponding discrete-time trajectory and control input sequence are given by 
\begin{subequations}
\begin{align}
    x^j_{k} \triangleq {} & \begin{cases}
        \selector{x}\tilde{x}^0_{k}&\text{if }k\in\bZr{1}{M},\\
        \selector{x}\tilde{x}^j_{k-M+1}&\text{if }k\in\bZr{M+1}{N},
    \end{cases}\\
    u^j_{k} \triangleq {} & \begin{cases}
        \selector{u}\tilde{u}^0_{k}&\text{if }k\in\bZr{1}{M-1},\\
        \selector{u}\tilde{u}^j_{k-M+1}&\text{if }k\in\bZr{M}{N-1}.
    \end{cases}
\end{align}
\end{subequations}
Note that the equality constraint \eqref{eq:ddto-scp-cnstrbc} is relaxed to \eqref{eq:ddto-scp-disc-cnstrrelax} to avoid automatic violation of constraint qualifications (see \cite[Sec. 3.1]{elango2024successive}). We solve \eqref{prb:ddto-scp-disc} using {\ctscvx}, an SCP-based nonconvex trajectory optimization framework \cite{elango2024successive}.
\subsection{Algorithm}
The {\ddto} solution methods described in Algorithms \ref{alg:ddtoqcvx} and \ref{alg:ddtoscp}, recursively solve \eqref{prb:ddto-qcvx} and \eqref{prb:ddto-scp-disc}, respectively. Both methods are myopic in their computation of branch times, i.e., they are computed sequentially instead of simultaneously, such as in {\ddtomicp} by solving \eqref{prb:micp-i}. Furthermore, after each branch time computation, the cumulative trajectory constraint must be updated to account for the contribution due to the previous trunk segment (i.e., $l_{\max}$ in \eqref{eq:ddto-qcvx-cumcostcnstr} must be updated). On the other hand, besides handling target prioritization, the recursive nature of the solution methods is beneficial within a closed-loop simulation (as demonstrated in \cite{haynerbuckner2023halo}) wherein the computation of the latest branch time can use new information acquired by perception. 

Algorithms \ref{alg:ddtoqcvx} and \ref{alg:ddtoscp} take initial state, targets, target prioritization, and trajectory horizon length(s), to provide branch and trunk trajectories. The total trajectory length $N$ passed to Algorithm \ref{alg:ddtoscp} is assumed to be an odd number.
\begin{algorithm}[!ht]
    \caption{{\ddtoqcvx}}
    \label{alg:ddtoqcvx}
    \begin{algorithmic}[1]
        \Require $z^0$, $\mc{Z}^j$, $\lambda^j$, $N^j$, for $j\in\bZr{1}{n}$
        \State $k^{\lambda^{n+1}}\gets 1$, $J_1\gets\bZr{1}{n}$
        \For{$k\in\bZr{1}{n-1}$}
            \State $M^j\gets N^j-k^{\lambda^{n-k+1}}$, for $j\in J_k$
            \State Solve \eqref{prb:ddto-qcvx} via bisection method for target set $J_k$
            \Statex\hspace{0.5cm}with initial state $z^0$ and horizon lengths $M^j$
            \State Store $k^{\lambda^{n-k+1}}$\Comment{Branch time}
            \LineComment{0.5cm}{Trunk trajectory and control input sequence}
            \State Store $x^0_k$, for $k\in\bZr{k^{\lambda^{n-k+2}}}{k^{\lambda^{n-k+1}}}$
            \State Store $u^0_k$, for $k\in\bZr{k^{\lambda^{n-k+2}}}{k^{\lambda^{n-k+1}}\!-\!1}$
            \LineComment{0.5cm}{Branch trajectory and control input sequence}
            \State Store $x^{\lambda^{n-k+1}}_k$, for $k\in\bZr{k^{\lambda^{n-k+1}}}{N^{\lambda^{n-k+1}}}$
            \State Store $u^{\lambda^{n-k+1}}_k$, for $k\in\bZr{k^{\lambda^{n-k+1}}}{N^{\lambda^{n-k+1}}\!-\!1}$
            \If{$k=n-1$}
                \State $k^{\lambda^1}\gets k^{\lambda^2}$
                \State Store $x^{\lambda^{1}}_k$, for $k\in\bZr{k^{\lambda^{2}}}{N^{\lambda^{1}}}$
                \State Store $u^{\lambda^{1}}_k$, for $k\in\bZr{k^{\lambda^{2}}}{N^{\lambda^{1}}\!-\!1}$
            \EndIf
            \State $J_{k+1}\gets J_k\setminus\{\lambda^{n-k+1}\}$\Comment{Reject target}
            \State $z^0\gets x^0_{k^{n-k+1}}$\Comment{Branch point}
            \State Update cumulative trajectory constraint
        \EndFor 
        \Ensure $x_k^0$, $u_k^0$, for $k\in\bZr{1}{k^{\lambda^2}}$,
        \Statex\hspace{0.85cm}$x_k^{\lambda^j}$, $u^{\lambda^j}_k$, for $k\in\bZr{k^{\lambda^j}}{N^{\lambda^j}}$, $j\in\bZr{1}{n}$
    \end{algorithmic}
\end{algorithm}
\begin{algorithm}[!ht]
    \caption{{\ddtoscp}}
    \label{alg:ddtoscp}
    \begin{algorithmic}[1]
        \Require $z^0$, $\mc{Z}^j$, $\lambda^j$, for $j\in\bZr{1}{n}$, $N$
        \State $t^{\lambda^{n+1}}\gets 0$, $J_1\gets\bZr{1}{n}$
        \State $M \gets N+1$
        \For{$k\in\bZr{1}{n-1}$}
            \State $M \gets \lceil M/2 \rceil$
            \State Solve \eqref{prb:ddto-scp-disc} via {\ctscvx} for target set $J_k$
            \Statex\hspace{0.5cm}with initial state $z^0$ and horizon length $M$
            \State $t^{\lambda^{n-k+1}}\gets t^{\lambda^{n-k+2}} + \selector{t}\tilde{x}^0(1)$\Comment{Branch time}
            \State $\tfinal^{\lambda^{n-k+1}}\gets t^{\lambda^{n-k+1}} + \selector{t}\tilde{x}^{\lambda^{n-k+1}}(1)$
            \LineComment{0.5cm}{Trunk trajectory and control input}
            \State Store $x^0(t)$, for $t\in[t^{\lambda^{n-k+2}},t^{\lambda^{n-k+1}}]$
            \State Store $u^0(t)$, for $t\in[t^{\lambda^{n-k+2}},t^{\lambda^{n-k+1}}]$
            \LineComment{0.5cm}{Branch trajectory and control input sequence}
            \State Store $x^{\lambda^{n-k+1}}(t)$, for $t\in[t^{\lambda^{n-k+1}},\tfinal^{\lambda^{n-k+1}}]$
            \State Store $u^{\lambda^{n-k+1}}(t)$, for $t\in[t^{\lambda^{n-k+1}},\tfinal^{\lambda^{n-k+1}}]$
            \If{$k=n-1$}
                \State $t^{\lambda^1}\gets t^{\lambda^2}$
            \State Store $x^{\lambda^{1}}(t)$, for $t\in[t^{\lambda^{2}},\tfinal^{\lambda^{1}}]$
            \State Store $u^{\lambda^{1}}(t)$, for $t\in[t^{\lambda^{2}},\tfinal^{\lambda^{1}}]$
            \EndIf
            \State $J_{k+1}\gets J_k\setminus\{\lambda^{n-k+1}\}$\Comment{Reject target}
            \State $z^0\gets x^0(t^{\lambda^{n-k+1}})$\Comment{Branch point}
            \State Update cumulative trajectory constraint
        \EndFor 
        \Ensure $x^0(t),u^0(t)$, for $t\in[0,t^{\lambda^2}]$,
        \Statex\hspace{0.85cm}$x^{\lambda^j}(t),u^{\lambda^j}(t)$, for $t\in[t^{\lambda^j},\tfinal^{\lambda^j}]$, $j\in\bZr{1}{n}$
    \end{algorithmic}
\end{algorithm}
\section{Numerical Results}\label{sec:num-results}
This section demonstrates {\ddtoqcvx} (Algorithm \ref{alg:ddtoqcvx}), {\ddtomicp}, and {\ddtoscp} (Algorithm \ref{alg:ddtoscp}), using two optimal control applications based on quadrotor motion planning (described in Appendix \ref{app:ocp-eg}). To ensure reliable numerical performance of the solution methods, we scale the primal variables and path constraints functions so that the values that they take are of similar orders of magnitude. The code used to generate the numerical results is provided at:
\begin{center}
\url{https://github.com/purnanandelango/ddto}
\end{center}
%
%
Figures \ref{fig:ddto-qcvx} and \ref{fig:ddto-micp} show the results of Algorithm \ref{alg:ddtoqcvx} and {\ddtomicp}, respectively, for a discrete-time convex optimal control example described in Appendix \ref{app:ocp-eg-dt-cvx}. Figure \ref{fig:ddto-scp} shows the result of Algorithm \ref{alg:ddtoscp} for a continuous-time nonconvex optimal control example described in Appendix \ref{app:ocp-eg-ct-ncvx}. Both examples, which relate to quadrotor motion planning, consider four target states with a specified priority order. The first trunk is denoted with \inlinecircle[2.2pt]{1pt}{trunkone}, the second trunk with \inlinesquare[4pt]{1pt}{trunktwo}, and the third trunk with \inlinetriangle[5pt]{1pt}{trunkthree}. The branches are denoted with \inlinedot[2pt]{1pt}{\customorange}, \inlinedot[2pt]{1pt}{\customblue}, \inlinedot[2pt]{1pt}{\customgreen}, and \inlinedot[2pt]{1pt}{\customred}. The constraint bounds are denoted with \tikz[baseline=(X.base)] \draw[cyan!50,line width=2pt] (0,1pt) -- (5pt,1pt) node (X) at (2pt,-2pt) {};. The nodes in Figure \ref{fig:ddto-qcvx} correspond to the discrete-time state and control input, whereas the nodes in Figure \ref{fig:ddto-scp} denote the SCP solution variables at the discretization nodes.

The most preferred target, $z^1$, plays the role of target $i$ in \eqref{prb:micp-i}. Observe that the branch times in the {\ddtomicp} solution are more delayed than in the {\ddtoqcvx} solution. While {\ddtomicp} can provide greater overall deferrability than {\ddtoqcvx}, it cannot however enforce a target preference order.
\begin{figure}[!ht]
    \centering
    \begin{subfigure}[b]{\linewidth}
    \centering
    \includegraphics[width=0.9\linewidth]{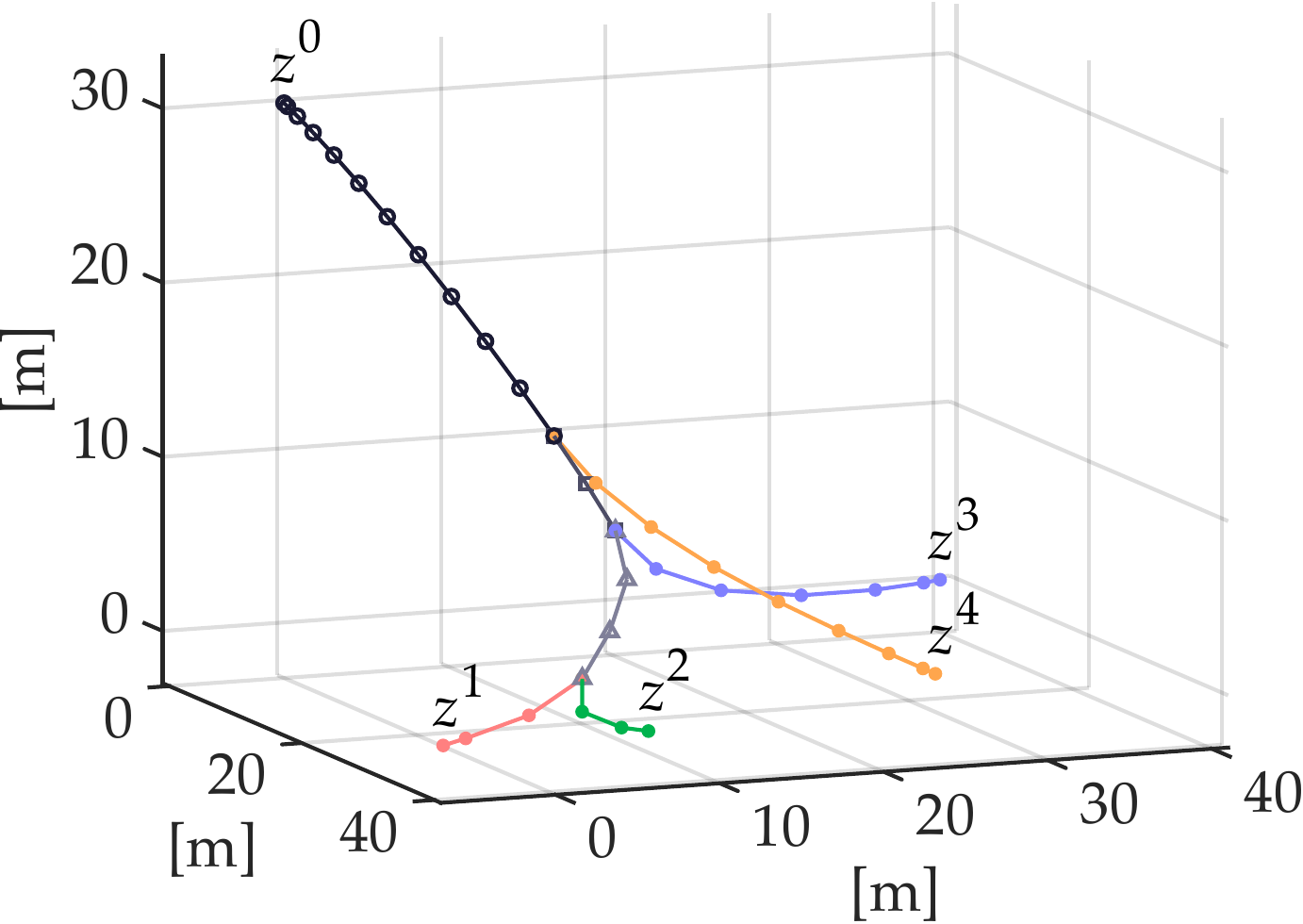}
    \caption{Position}
    \end{subfigure}
    
    \vspace{0.3cm}
     
    \begin{subfigure}[b]{\linewidth}
    \centering
    \includegraphics[width=0.85\linewidth]{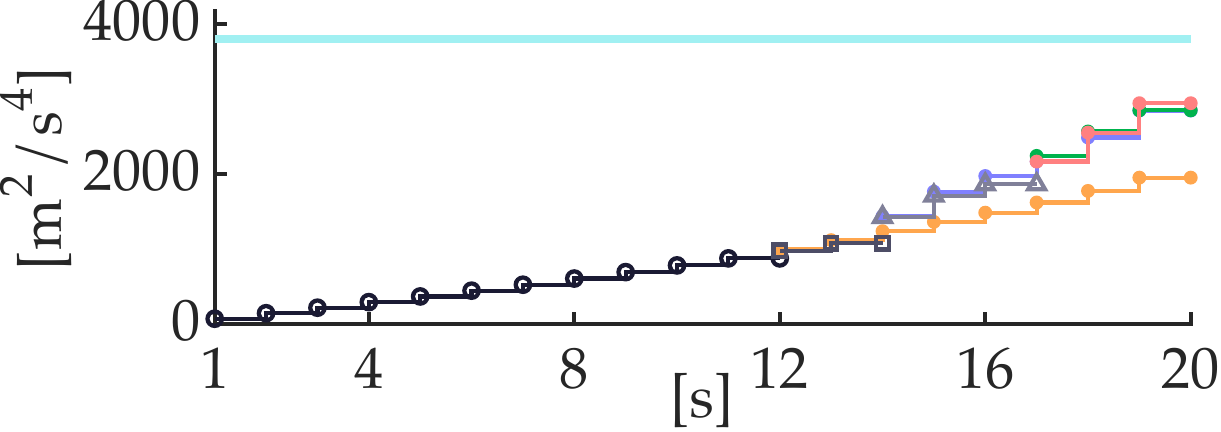}
    \caption{Cumulative trajectory cost}
    \end{subfigure}
    
    \vspace{0.3cm}
     
    \begin{subfigure}[b]{\linewidth}
    \centering
    \includegraphics[width=0.85\linewidth]{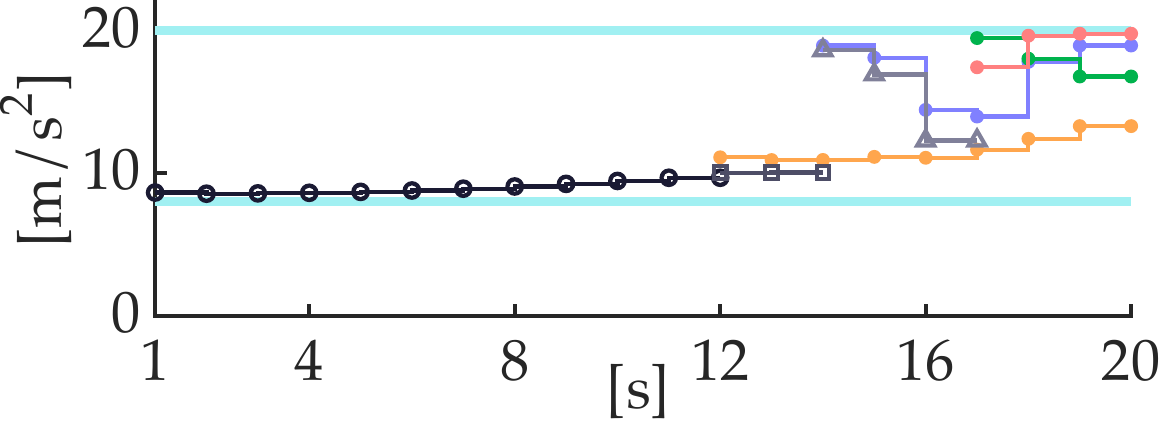}
    \caption{Thrust magnitude}
    \end{subfigure}
    
    \vspace{0.3cm}
     
    \begin{subfigure}[b]{\linewidth}
    \centering
    \includegraphics[width=0.85\linewidth]{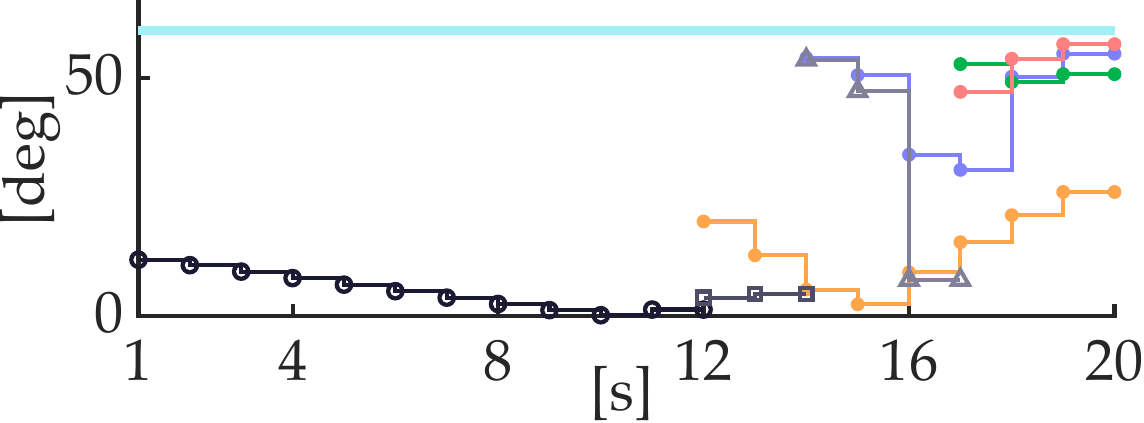}
    \caption{Thrust pointing angle}
    \end{subfigure}
\caption{Algorithm \ref{alg:ddtoqcvx} applied to the discrete-time convex optimal control example in Appendix \ref{app:ocp-eg-dt-cvx}.}
\label{fig:ddto-qcvx}
\end{figure}
\begin{figure}[!ht]
    \centering
    \includegraphics[width=0.8\linewidth]{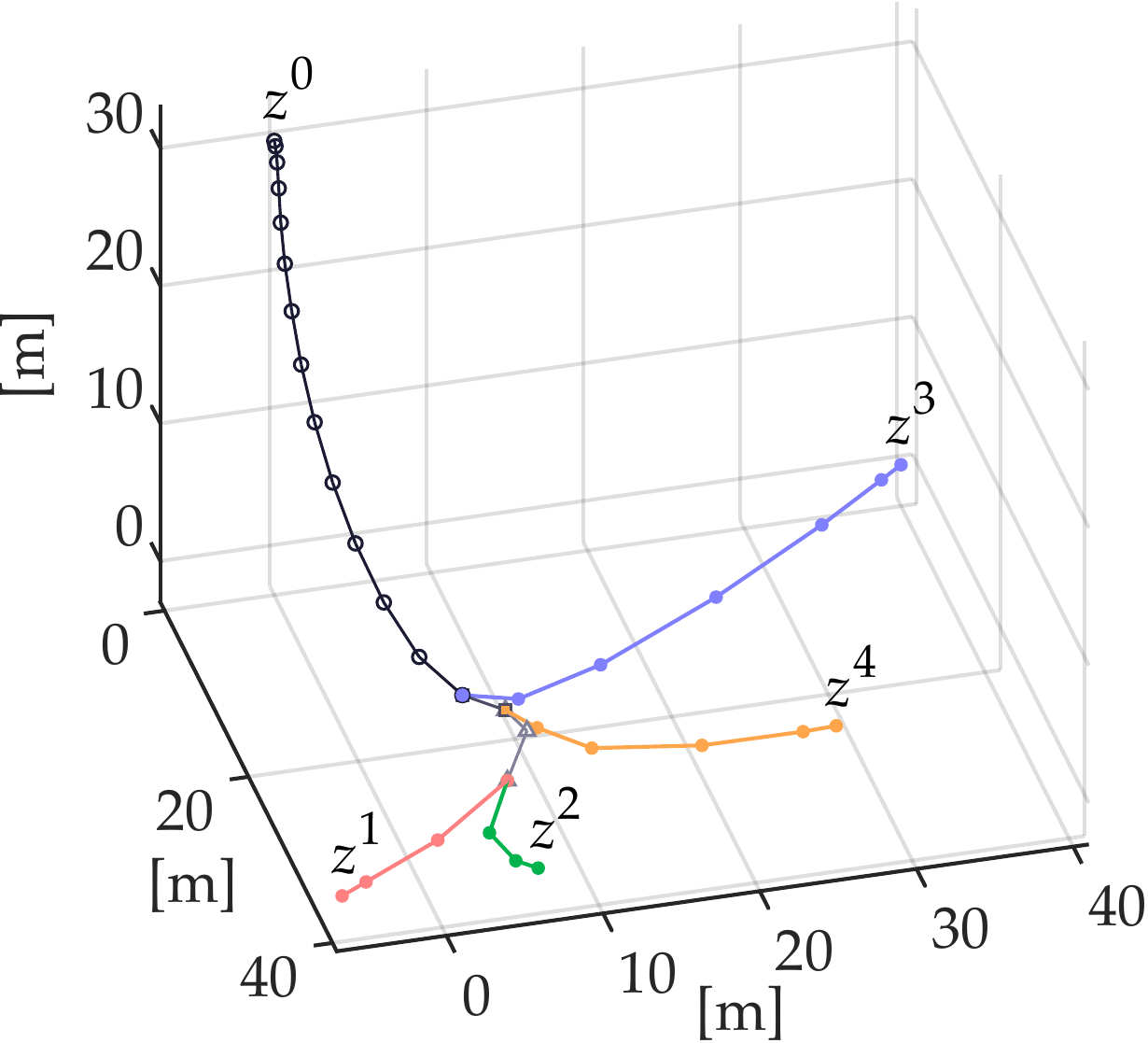}
    \caption{Position trajectories for the discrete-time convex optimal control example in Appendix \ref{app:ocp-eg-dt-cvx}, obtained with {\ddtomicp}.}
    \label{fig:ddto-micp}
\end{figure}
%
%
%
\begin{figure}[!ht]
\centering
\begin{subfigure}[b]{\linewidth}
\centering
\includegraphics[width=0.9\linewidth]{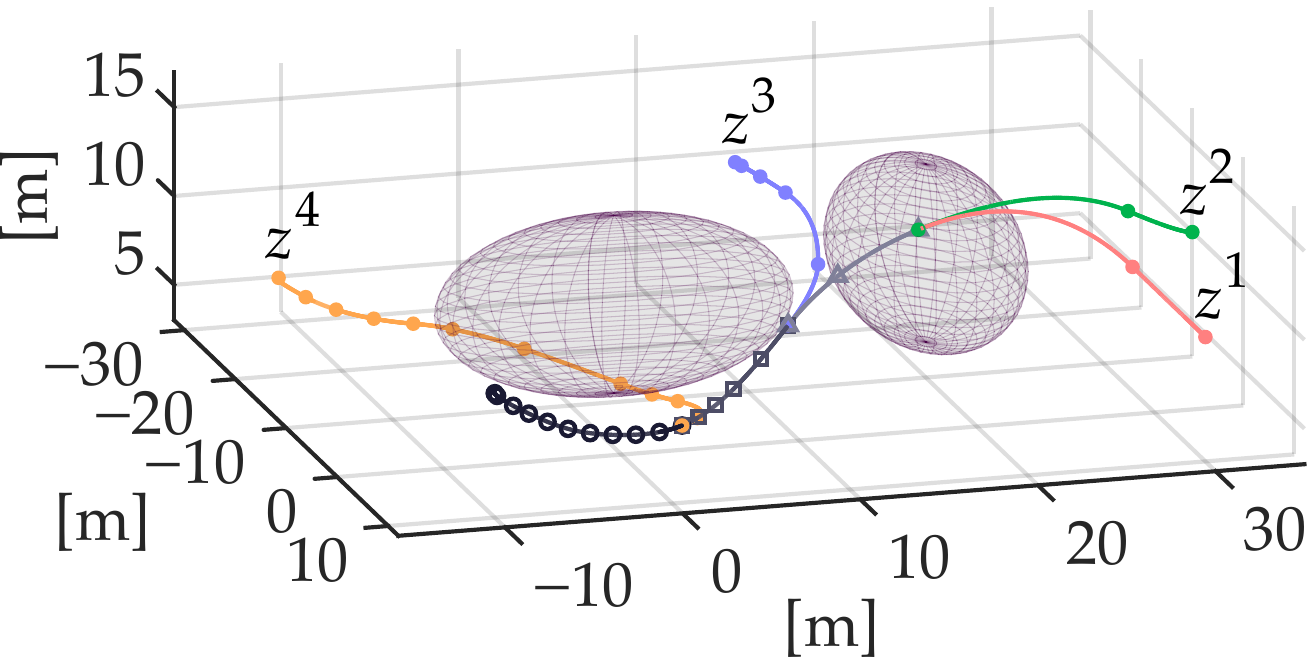}
\caption{Position}
\end{subfigure}

\vspace{0.3cm}
 
\begin{subfigure}[b]{\linewidth}
\centering
\includegraphics[width=0.85\linewidth]{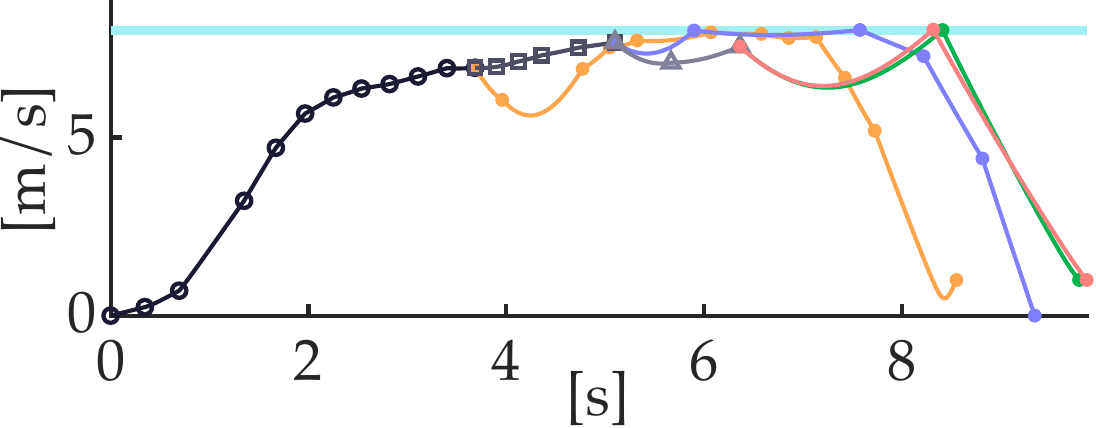}
\caption{Speed}
\end{subfigure}

\vspace{0.3cm}
 
\begin{subfigure}[b]{\linewidth}
\centering
\includegraphics[width=0.9\linewidth]{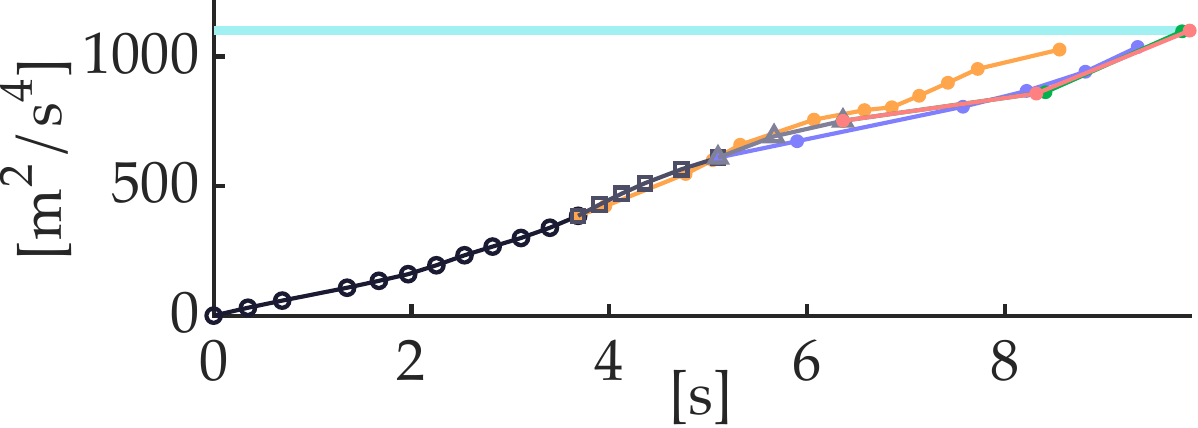}
\caption{Cumulative trajectory cost}
\end{subfigure}

\vspace{0.3cm}
 
\begin{subfigure}[b]{\linewidth}
\centering
\includegraphics[width=0.85\linewidth]{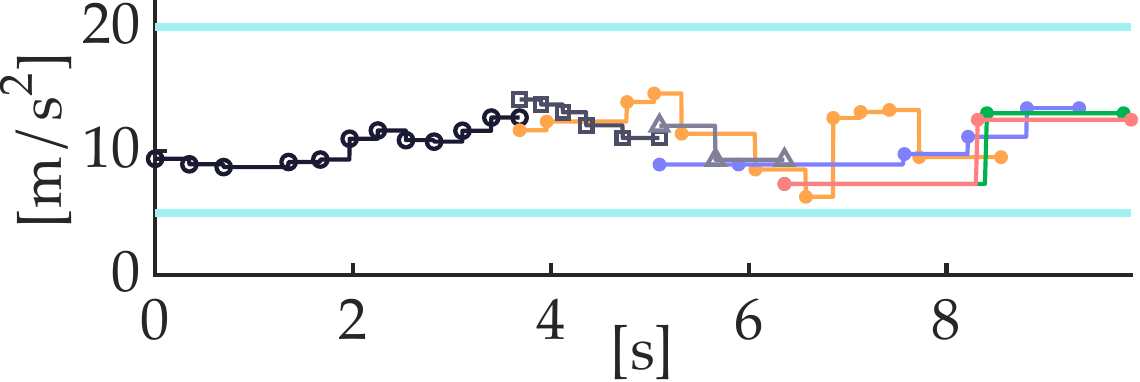}
\caption{Thrust magnitude}
\end{subfigure}

\vspace{0.3cm}
 
\begin{subfigure}[b]{\linewidth}
\centering
\includegraphics[width=0.85\linewidth]{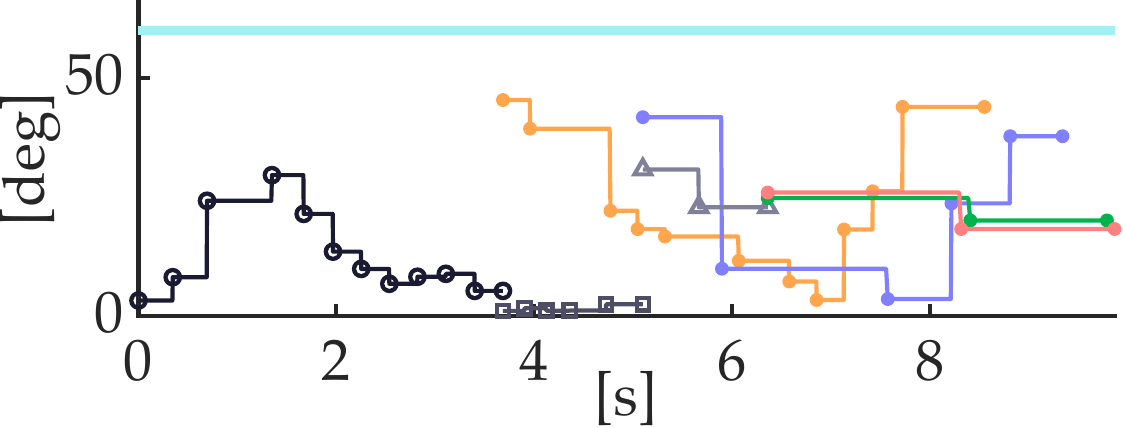}
\caption{Thrust pointing angle}
\end{subfigure}
\caption{Algorithm \ref{alg:ddtoscp} applied to the continuous-time nonconvex optimal control example in Appendix \ref{app:ocp-eg-ct-ncvx}.}
\label{fig:ddto-scp}
\end{figure}
\section{Conclusion}
We present a trajectory optimization framework called {\ddto}---deferred-decision trajectory optimization---for trajectory optimization in the presence of unmodeled uncertainties and contingencies due to imperfect knowledge of the system model or its environment. The key idea is to ensure that a collection of targets is available for as long as possible, which allows the vehicle to defer the decision to select a target as much as possible. In a closed-loop setting this would provide more time to quantify the uncertainties and contingencies so that the most reliable target can be eventually selected. To this end, we proposed a fully-deterministic optimization-based approach for formulating {\ddto} via constrained reachable sets and provided their equivalent representations as a cardinality minimization problems. Then by inferring the structure of the optimal {\ddto} solutions, we designed specialized solution methods for convex and nonconvex optimal control problems, which we demonstrated on two optimal control applications based on quadrotor motion planning. 

Future work will consider the complete closed-loop setting wherein useful information from measurement and perception is quantified and embedded into the {\ddto} formulation, with the goal of biasing the motion of the vehicle toward regions where the uncertainties and contingencies can be more easily quantified or detected.
%
\section*{Acknowledgments}
The authors gratefully acknowledge Samuel C. Buckner and Skye Mceowen for helpful discussions and their feedback on the initial draft of the paper.
\bibliographystyle{unsrturl}
\bibliography{references}
\section*{Appendix}
\appendix
\section{Time-Dilation}\label{app:dilation}
Consider a discrete-time dynamical system given by
\begin{align}
    x_{k+1} ={} & f_k(x_k,u_k),\quad k\ge 1.
\end{align}
We consider a trajectory and a control input sequence defined over the grid: $0=t_1<\ldots<t_N=\tfinal$. Suppose that the underlying continuous-time dynamical system is 
\begin{align}
    \dot{x}(t) = F(x(t),u(t)),\quad t\in[0,\tfinal]\label{eq:ct-dyn-sys}
\end{align}
and assume that the continuous-time control input $u$ is subject to a zero-order-hold parametrization. Then for each $k\in\bZr{1}{N-1}$, we have the following
\begin{align}
    f_k(x_k,u_k) \triangleq{} & x_k + \int_{t_k}^{t_{k+1}}F({}^kx(t),u_k)\mr{d}t,
\end{align}
where ${}^kx$ is a trajectory for \eqref{eq:ct-dyn-sys} over $[t_k,t_{k+1}]$ with constant control input $u_k$ and initial condition $x_k$. Note that subscript ``$k$'' in $f_k$ accounts for the possibility that the grid $t_1,\ldots,t_N$ is nonuniformly spaced. 

Next we apply time-dilation, which considers a new independent variable $\tau\in[0,1]$ by mapping $[0,1]$ to $[0,\tfinal]$ using a strictly-increasing, piecewise-linear continuous function $t$. Consider a uniformly-spaced grid within $[0,1]$ given by $0=\tau_1<\ldots<\tau_N=1$. For each $k\in\bZr{1}{N-1}$, the derivative of $t$, referred to as the dilation factor, is constant over the interval $[\tau_k,\tau_{k+1})$ and is denoted by $s_k$, i.e., 
\begin{align}
    & \frac{\mr{d}t(\tau)}{\mr{d}\tau} = s_k,\quad\forall\,\tau\in[\tau_k,\tau_{k+1}).
\end{align}
We treat the dilation factor as an additional control input, i.e., $\tilde{u}_k\triangleq (u_k,s_k)$ for each $k\in\bZr{1}{N-1}$, to obtain a new dynamical system with an augmented control input, i.e.,
\begin{align*}
    \tilde{f}(x_k,\tilde{u}_k) \triangleq{} & x_k + \int_{\tau_{k}}^{\tau_{k+1}}s_kF({}^kx(t(\tau)),u_k)\mr{d}\tau,\\
    ={} & f_k(x_k,u_k) = x_{k+1}.
\end{align*}
We constrain $s$ to be positive and bounded (for strict monotonicity and physically meaningful values of $t$). While all trajectories of horizon length $N$ for the new dynamical system are defined over $[0,1]$, they can each correspond to a different final time. Suppose that a collection of trajectories with same initial state but different target states are required to be coincident for as long as possible, then each of those trajectories can have a different final time in spite of having the same horizon length. The dilation factors of all trajectories will be the same in the coincident portion of trajectory; beyond that, they may differ (since the dilation factor serves a role similar to the original control inputs).
\section{Optimal Control Examples}\label{app:ocp-eg}
The following sections describe a discrete-time convex and a continuous-time nonconvex optimal control problem based on quadrotor motion planning.
\subsection{Discrete-Time Convex Problem}\label{app:ocp-eg-dt-cvx}
We consider a discrete-time model of a point-mass aerial vehicle where the state $x_k = (r_k,v_k)$ consists of three-dimensional position $r_k\in\bR^{3}$, velocity $v_k\in\bR^{3}$, and the control input $u_k\in\bR^3$ is an acceleration. The dynamical system is given by
\begin{align}
    x_{k+1} ={} & \underbrace{\begin{bmatrix} \eye{3} & \Delta t\eye{3} \\ \zeros{3\times3} & \eye{3}  \end{bmatrix}}_{A}x_k + \underbrace{\begin{bmatrix} \frac{\Delta t^2}{2}\eye{3} \\ \Delta t\eye{3}  \end{bmatrix}}_{B}u_k + \underbrace{\begin{bmatrix} \frac{\Delta t^2}{2}a \\ \Delta ta \end{bmatrix}}_{c},
\end{align}
where the vehicle has unit mass, $a\in\bR^{3}$ is the acceleration due to gravity, $\Delta t$ is the sampling time. The control input is subject to the following path constraints
\begin{subequations}
\begin{align}
    \|u_k\| \le{} & u_{\max},\\
    \|u_k\| \le{} & \sec\delta_{\max}\hat{e}^\top u_k,\label{eq:dt-pointing-cnstr}\\
    \hat{e}^\top u_k \ge{} & u_{\min},\label{eq:cvx-proxy-ctrl-lbnd}
\end{align}
\end{subequations}
where $u_{\max}$ and $u_{\min}$ are the upper and lower bounds, respectively, on the control input magnitude, $\delta_{\max}$ is the maximum angle between the control input and the pointing vector $\hat{e}$. Note that in the presence of the pointing constraint \eqref{eq:dt-pointing-cnstr}, a conservative convex approximation for the (nonconvex) lower bound constraint on the control input magnitude is provided by \eqref{eq:cvx-proxy-ctrl-lbnd}. For each $j\in J$, the targets are singleton sets denoted by $\mc{Z}^j = \{z^j\}$. The stage cost function $l$ in \eqref{eq:ddto-qcvx-cumcostcnstr} is given by
\begin{align}
    l(x_k,u_k) \triangleq{} & \|u_k\|^2.
\end{align} 
Table \ref{tab:dt-cvx-param} shows the parameter values chosen for the system, Algorithm \ref{alg:ddtoqcvx}, and {\ddtomicp} to generate the results in Figures \ref{fig:ddto-qcvx} and \ref{fig:ddto-micp}.
\begin{table}[!htpb]
    \centering
    \caption{}
    \label{tab:dt-cvx-param}
    \begin{tabular}{c|l}
    \hline
    Parameter & Value\\\hline
    $\Delta t$ & $0.5$ s\\
    $a$ & $(0,0,-9.806)$ m/s$^2$\\
    $n$ & $4$\\
    $N^j$ for $j\in\bZr{1}{n}$ & $20$\\
    $u_{\max},u_{\min}$ & $20,8$ m/s$^2$\\
    $\hat{e},\delta_{\max}$ & $(0,0,1),60^\circ$ \\
    $l_{\max}$ & $3794$ m$^2$/s$^4$\\
    $z^0$ & $(0,0,30,0,0,0)$ m,\,m/s\\
    $z^1$ & $(39.5,-6.25,0,0,0,0)$ m,\,m/s\\
    $z^2$ & $(39.5,6.25,0,0,0,0)$ m,\,m/s\\
    $z^3$ & $(28.3,28.3,0,0,0,0)$ m,\,m/s\\
    $z^4$ & $(40,0,0,0,0,0)$ m,\,m/s\\
    $\lambda^1,\lambda^2,\lambda^3,\lambda^4$ & $1,2,3,4$\\\hline
    \end{tabular}
\end{table}
\subsection{Continuous-Time Nonconvex Problem}\label{app:ocp-eg-ct-ncvx}
We consider a continuous-time model of a point-mass aerial vehicle where the state $x(t) = (r(t),v(t),\theta(t))$ consists of three-dimensional position $r(t)\in\bR^{3}$, velocity $v(t)\in\bR^3$, and cumulative trajectory cost $\theta(t)\in\bR$, and the control input $u(t)\in\bR^{3}$ is an acceleration. The dynamical system is given by
\begin{align}
    \dot{x}(t) ={} \!\!\begin{bmatrix} v(t) \\ u(t) - c_{\mr{d}}\|v(t)\|v(t) + a\\\|u(t)\|^2 \end{bmatrix}\!= {F(x(t),u(t))},
\end{align}
where the vehicle has unit mass, $a\in\bR^3$ is the acceleration due to gravity, and $c_{\mr{d}}$ is the drag coefficient. The path constraints on the vehicle are defined by $g:\bR^{7}\times\bR^{3}\to\bR^{7}$ as follows
\begin{align}
    g(x(t),u(t)) ={} & \begin{bmatrix}-\|H^1_{\mr{obs}}(r(t)-q^1_{\mr{obs}})\|^2+1\\[0.1cm]
                                      -\|H^2_{\mr{obs}}(r(t)-q^2_{\mr{obs}})\|^2+1\\[0.1cm]
                                       \|v(t)\|^2 - v_{\max}^2\\[0.1cm]
                                       \|u(t)\|^2 - u_{\max}^2\\[0.1cm]
                                      -\|u(t)\|^2 + u_{\min}^2\\[0.1cm]
                                       \|u(t)\|^2 - (\sec\delta_{\max}\hat{e}^\top u(t))^2\\
                                      -\hat{e}^\top u(t)                                        
                                \end{bmatrix}.
\end{align}
where $H^i_{\mr{obs}}$ and $q^i_{\mr{obs}}$, for $i=1,2$, are the shape matrices and centers, respectively, of ellipsoidal obstacles, $v_{\max}$ is the upper bound on speed, $u_{\max}$ and $u_{\min}$ are the upper and lower bounds, respectively, on the control input magnitude, and $\delta_{\max}$ is the maximum angle between the control input and the pointing vector $\hat{e}$. Note that $g_i$, for $i\in\bZr{1}{n_g}$ with $n_g=7$, in \eqref{eq:aug-sys-ct} denote the scalar-valued components of $g$. The second-order-cone constraint (due to pointing vector) is equivalently reformulated to a quadratic form to ensure continuous differentiability \cite[Sec. 3.2.4]{mosekcookbook}. The boundary condition constraint functions $P^j$ and $Q^j$ are defined as 
\begin{subequations}
\begin{align}
    P^j(x(\tfinal)) \triangleq{} & \theta(\tfinal) - l_{\max}, \\
    Q^j(x(\tfinal)) \triangleq{} & (r(\tfinal),v(\tfinal)) - z^j,
\end{align}
\end{subequations}
where $z^j\in\bR^6$ specify the target position and velocity, for $j\in J$, and $x(\tfinal)$ is the terminal state of a trajectory.

Table \ref{tab:ct-ncvx-param} shows the parameter values chosen for the system and Algorithm \ref{alg:ddtoscp} to generate the results in Figure \ref{fig:ddto-scp}.
\begin{table}[!htpb]
\centering
\caption{}
\label{tab:ct-ncvx-param}
\begin{tabular}{c|l}
\hline
Parameter & Value\\\hline
$a$ & $(0,0,-9.806)$ m/s$^2$\\
$c_{\mr{d}}$ & $0.01$ 1/m\\
$n,N$ & $4,23$\\
$v_{\max}$ & $8$ m/s\\
$u_{\max},u_{\min}$ & $20,5$ m/s$^2$\\
$\hat{e},\delta_{\max}$ & $(0,0,1),60^\circ$ \\
$l_{\max}$ & $1100$ m$^2$/s$^4$\\
$H^1_{\mr{obs}},H^2_{\mr{obs}}$ & $\mr{diag}(0.2,0.1,0.2),\mr{diag}(0.1,0.2,0.2)$\\[0.05cm]
$q^1_{\mr{obs}},q^2_{\mr{obs}}$ & $(-5,1,10),(-10,20,10)$ m\\[0.03cm]
$z^0$ & $(10,-10,10,0,0,0,0)$ m,\,m/s,\,m$^2$/s$^4$\\
$z^1$ & $(10,30,10,1,0,0)$ m,\,m/s\\
$z^2$ & $(-10,35,10,0,1,0)$ m,\,m/s\\
$z^3$ & $(-30,15,10,0,0,0)$ m,\,m/s\\
$z^4$ & $(-15,-15,10,0,1,0)$ m,\,m/s\\
$\lambda_1,\lambda_2,\lambda_3,\lambda_4$ & $1,2,3,4$\\
$\tilde{\mc{U}}^j$ & $\{u\in\bR^3|\|u\|_{\infty}\le u_{\max}\} \times [1,15]$\\
$\epsilon$ & $10^{-5}$\\\hline
\end{tabular}
\end{table}
\end{document}